\documentclass[11pt]{amsart}

\usepackage{amsfonts,amsmath,amssymb,amscd}

\usepackage{amsthm}
\usepackage{latexsym}
\usepackage[dvips]{graphicx}
\usepackage[dvips]{psfrag}
\usepackage[dvips]{color}

\copyrightinfo{2006}{American Mathematical Society}

\newtheorem{theorem}{Theorem}[section]
\newtheorem{lemma}[theorem]{Lemma}

\theoremstyle{definition}

\newcommand{\Nbd}{\operatorname{Nbd}}
\newcommand{\cl}{\operatorname{cl}}

\numberwithin{equation}{section}

\begin{document}

\title[Primitive disk complexes]{Primitive disk complexes for lens spaces}

\author{Sangbum Cho}\thanks{The first-named author is supported in part by Basic Science Research Program through the National Research Foundation of Korea(NRF) funded by the Ministry of Education, Science and Technology (2012006520).}

\address{Department of Mathematics Education, Hanyang University, Seoul 133-791, Korea}
\email{scho@hanyang.ac.kr}

\author{Yuya Koda}\thanks{The second-named author is supported in part by
Grant-in-Aid for Young Scientists (B) (No. 20525167), Japan Society for the Promotion of Science.}

\address{Mathematical Institute, Tohoku University, Sendai, 980-8578, Japan}
\email{koda@math.tohoku.ac.jp}

\subjclass[2000]{Primary 57N10.}

\date{\today}

\begin{abstract}
For a genus two Heegaard splitting of a lens space, the primitive disk complex is defined to be the full subcomplex of the disk complex for one of the handlebodies of the splitting spanned by all vertices of primitive disks.
In this work, we describe the complete combinatorial structure of the primitive disk complex for the genus two Heegaard splitting of each lens space. In particular, we find all lens spaces whose primitive disk complexes are contractible.
\end{abstract}

\maketitle

\section{Introduction}
\label{sec:intro}

Every closed orientable $3$-manifold can be decomposed into two handlebodies of the same genus, which is called a Heegaard splitting of the manifold.
The genus of the handlebodies is called the genus of the splitting.
The $3$-sphere admits a Heegaard splitting of each genus $g \geq 0$, and a lens space a Heegaard splitting of each genus $g \geq 1$.
Furthermore, they are known to have a unique Heegaard splitting up to isotopy for each genus.

There is a well known simplicial complex, called the disk complex, for a handlebody, in general for an arbitrary irreducible $3$-manifold with compressible boundary.
The disk complex is not only a powerful tool when we study the mapping class group of a manifold, but already an interesting topological object by itself.
For a Heegaard splitting of a manifold, one can define a special subcomplex of the disk complex for one of the handlebodies in the splitting, which is invariant under any automorphism of the manifold preserving the given splitting.
In particular, when a given splitting is stabilized, we define a natural subcomplex of the disk complex for one of the handlebodies, which is called the primitive disk complex.
The primitive disk complex is the full subcomplex spanned by special kind of vertices, the vertices of primitive disks in the handlebody.

The group of automorphisms preserving a given splitting of genus $g$ is called the genus $g$ Goeritz group of the splitting.
Investigating the action of the Goeritz group on an appropriate subcomplex, one might have a useful information on this group.
For example, in \cite{C} and \cite{C2}, the primitive disk complex for the genus two Heegaard splitting for each of the $3$-sphere and the lens spaces $L(p, 1)$ is studied to obtain a finite presentation of the genus two Goeritz group of the splitting.
The study of the primitive disk complexes was motivated by Scharlemann's work \cite{Sc} on the complex of reducing spheres for the genus two splitting of the $3$-sphere, which was analyzed further by Akbas \cite{Ak}.
A generalized version of a primitive disk complex is also studied in \cite{Kod} for the group of automorphisms of the $3$-sphere preserving a genus two handlebody knot embedded in the $3$-sphere.

In this work, we are interested in the primitive disk complex for the genus two Heegaard splitting of each lens space $L(p, q)$.
The main results are Theorems \ref{thm:contractible} and \ref{thm:structure}.
In Theorem \ref{thm:contractible}, we find all lens spaces whose primitive disk complexes for their genus two splittings are contractible. It states that, for a lens space $L(p, q)$ with $1 \leq q \leq p/2$, the primitive disk complex for the genus two splitting of $L(p, q)$ is contractible if and only if $p \equiv \pm 1 \pmod{q}$.
In Theorem \ref{thm:structure}, we give a complete combinatorial structure of the primitive disk complex for each lens space, and describe how each of the complexes sits in the ambient disk complex for the genus two handlebody.
In particular, a contractible primitive disk complex is two dimensional if and only if $q=2$ or $p=2q+1$, and otherwise it is a tree.
Any non-contractible primitive disk complex turns out to be not connected and consists of infinitely but countably many tree components.

In the follow-up work \cite{CKod}, the results on the primitive disk complexes we obtained here will be fully used to find a presentation of the genus two Goeritz group of each lens space $L(p, q)$, including the special case of $L(p, 1)$ done in \cite{C2}.

We use the standard notation $L = L(p, q)$ for a lens space in standard textbooks.
For example, we refer \cite{Ro} to the reader.
That is, there is a genus one Heegaard splitting of $L$ such that an oriented meridian circle of a solid torus of the splitting is identified with a $(p, q)$-curve on the boundary torus of the other solid torus (fixing oriented longitude and meridian circles of the torus), where $\pi_1(L(p, q))$ is isomorphic to the cyclic group of order $|p|$.
The integer $p$ can be assumed to be positive, and it is well known that two lens spaces $L(p, q)$ and $L(p', q')$ are homeomorphic if and only if $p = p'$ and  $q'q^{\pm 1} \equiv \pm 1 \pmod p$.
Thus we will assume that $0 < q < p$ for a given $L(p, q)$, or even that $1 \leq q \leq p/2$ sometimes.
Further, there is a unique integer $q'$ satisfying $1 \leq q' \leq p/2$ and $qq' \equiv \pm 1 \pmod p$, and so, for any other genus one Heegaard splitting of $L(p, q)$, we may assume that an oriented meridian circle of a solid torus of the splitting is identified with a $(p, \bar q)$-curve on the boundary torus of the other solid torus for some $\bar q \in \{q, q', p-q', p-q\}$.

\smallskip

Throughout the paper, $(V, W; \Sigma)$ will denote a genus two Heegaard splitting of a lens space $L = L(p, q)$.
That is, $V$ and $W$ are genus two handlebodies such that $V \cup W = L$ and $V \cap W = \partial V = \partial W = \Sigma$, a genus two closed orientable surface in $L$.
Any disks in a handlebody are always assumed to be properly embedded, and their intersection is transverse and minimal up to isotopy.
In particular, if a disk $D$ intersects a disk $E$, then $D \cap E$ is a collection of pairwise disjoint arcs that are properly embedded in both $D$ and $E$.
Finally, $\Nbd(X)$ will denote a regular neighborhood of $X$ and $\cl(X)$ the closure of $X$ for a subspace $X$ of a polyhedral space, where the ambient space will always be clear from the context.

\section{Primitive elements of the free group of rank two}
\label{sec:free_group}

The fundamental group of the genus two handlebody is the free group $\mathbb Z \ast \mathbb Z$ of rank two.
We call an element of $\mathbb Z \ast \mathbb Z$ {\it primitive} if it is a member of a generating pair of $\mathbb Z \ast \mathbb Z$.
Primitive elements of $\mathbb Z \ast \mathbb Z$ have been well understood.
For example, given a generating pair $\{y, z\}$ of $\mathbb Z \ast \mathbb Z$, a cyclically reduced form of any primitive element $w$ can be written as a product of terms each of the form $y^\epsilon z^n$ or $y^\epsilon z^{n+1}$, or else a product of terms each of the form $z^\epsilon y^n$ or $z^\epsilon y^{n+1}$, for some $\epsilon \in \{1,-1\}$ and some $n \in \mathbb Z$.
Consequently, no cyclically reduced form of $w$ in terms of $y$ and $z$ can contain $y$ and $y^{-1}$ $($and $z$ and $z^{-1})$ simultaneously.
Furthermore, we have the explicit characterization of primitive elements containing only positive powers of $y$ and $z$ as follows, which is given in \cite{OZ}.

\begin{lemma}
Suppose that $w$ consists of exactly $m$ $z$'s and $n$ $y$'s where $1 \leq m \leq n$.
Then $w$ is primitive if and only if $(m, n) = 1$ and $w$ has the following cyclically reduced form
$$w = w(m, n) = g(1)g(1+m)g(1+2m)\cdots g(1+(m+n-1)m)$$
where the function $g:\mathbb Z \rightarrow \{z, y\}$ is defined by
$$g(i)= g_{m, n}(i) =
\begin{cases}
~z &  \ \text{if~ ~$i \equiv 1, 2, \cdots, m  \pmod{(m+n)}$} \\
~y &  \ \text{otherwise.}
\end{cases}
$$
\label{lem:primitive}
\end{lemma}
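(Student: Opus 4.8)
The plan is to prove both implications of Lemma~\ref{lem:primitive} by descent along the subtractive Euclidean algorithm on the pair $(m,n)$, using two elementary facts: that every automorphism of $\mathbb Z\ast\mathbb Z$ (including inner automorphisms) carries primitive elements to primitive elements, and that the abelianization $\mathbb Z\ast\mathbb Z\to\mathbb Z^2$ carries a primitive element to a primitive integer vector. A positive word is automatically cyclically reduced, so throughout I would pass freely to cyclic permutations, which is harmless since conjugation preserves primitivity. The bookkeeping is done with the mutually inverse automorphisms $\alpha\colon y\mapsto y,\ z\mapsto zy$ and $\alpha^{-1}\colon y\mapsto y,\ z\mapsto zy^{-1}$, together with the symmetric pair $\beta\colon z\mapsto z,\ y\mapsto yz$ and $\beta^{-1}$. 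On the level of letter counts, $\alpha$ sends type $(m,n)$ to $(m,n+m)$ and $\alpha^{-1}$ sends $(m,n)$ to $(m,n-m)$.

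For sufficiency one assumes $\gcd(m,n)=1$ and $w=w(m,n)$ and must show $w$ is primitive. The idea is to exhibit $w(m,n)$ as the image of a single generator under a composition of the maps $\alpha^{\pm1},\beta^{\pm1}$. When $m\le n$ the defining rotation pattern for $g$ places at least $\lfloor n/m\rfloor\ge 1$ letters $y$ after each $z$, so $\alpha^{-1}$ applied to $w(m,n)$ is again a \emph{positive} word; I would verify directly from the formula for $g$ that it equals, up to cyclic permutation, the balanced word of type $(m,n-m)$, and symmetrically that $\beta^{-1}$ reduces the $z$-count when $n\le m$. Since $\gcd(m,n)=1$, iterating this terminates at a lone generator $z$ or $y$, which is primitive; reversing the reductions (applying $\alpha,\beta$) preserves primitivity at each stage, so $w(m,n)$ is primitive. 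The base case is $w(1,1)=zy$.

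For necessity, suppose $w$ is primitive of type $(m,n)$ with $1\le m\le n$. Abelianizing shows immediately that $\gcd(m,n)=1$. For the explicit form I would invoke the structure result quoted just before the statement: since $m\le n$, the letter $z$ is the rarer one, so a cyclically reduced form of $w$ is a product of $m$ blocks, each of the form $zy^{a}$ or $zy^{a+1}$ for a fixed $a$, and counting the $y$'s forces $a=\lfloor n/m\rfloor\ge 1$. In particular every $z$ is followed by a $y$, so $\alpha^{-1}(w)$ is again a positive word, now of type $(m,n-m)$, and it is primitive because $\alpha^{-1}$ is an automorphism. By strong induction on $m+n$ (the base $m=1$ being forced, as the only positive cyclic word is $zy^{n}=w(1,n)$) this reduced word is the balanced word of type $(m,n-m)$ up to cyclic permutation; since $\alpha$ is a bijection on cyclic words, $w$ is its unique $\alpha$-preimage, which the formula for $g$ identifies as $w(m,n)$. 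When $n-m<m$ the roles of $y$ and $z$ interchange and one uses $\beta^{-1}$ on the following step, matching the symmetric clause of the quoted structure result; either way $m+n$ strictly decreases while coprimality and primitivity are preserved.

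The hard part, in both directions, is the cyclic bookkeeping inside this descent. One must check that $\alpha^{-1}$ induces a well-defined bijection on cyclic words of the relevant type, that it intertwines the Euclidean step $(m,n)\mapsto(m,n-m)$, and above all that it matches the closed form for $g$ across the step. This last point is where I expect the real work: it amounts to showing that the rotation-by-$m$ pattern on $\mathbb Z/(m+n)$ defining $g_{m,n}$ restricts to the rotation-by-$m$ pattern on $\mathbb Z/n$ defining $g_{m,n-m}$. This in turn relies on $\gcd(m,m+n)=\gcd(m,n)=1$, which guarantees that the residues $jm\bmod(m+n)$ for $0\le j\le m+n-1$ run over every class exactly once, so that exactly $m$ letters $z$ land in the balanced positions prescribed by $g$. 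Tracking the regime switch $m\leftrightarrow n-m$ so that the two clauses of the structure result are invoked consistently is the remaining delicate bit of organization.
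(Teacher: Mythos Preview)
The paper does not actually prove this lemma: it is stated as ``given in \cite{OZ}'' (Osborne--Zieschang), so there is no in-paper proof to compare your proposal against.

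That said, your outline is a sound and standard route to the Osborne--Zieschang characterization. The Euclidean descent via the Nielsen automorphisms $\alpha^{\pm1},\beta^{\pm1}$ is exactly how such results are typically established; abelianization handles the $\gcd$ condition, and the block structure quoted before the lemma guarantees that at each step the reduction $\alpha^{-1}$ (or $\beta^{-1}$) lands in positive words, so the induction can proceed. The point you correctly flag as the real work---that $\alpha^{-1}$ carries $w(m,n)$ to (a cyclic permutation of) $w(m,n-m)$, i.e.\ that the rotation-by-$m$ pattern on $\mathbb Z/(m+n)$ projects to the rotation-by-$m$ pattern on $\mathbb Z/n$---is indeed the crux, but it is a straightforward residue computation once set up carefully. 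One small caution: in the necessity direction you lean on the block-form structure stated just before the lemma, which the paper also attributes to the literature rather than proving; make sure you regard that as an independent input and not as something derived from the lemma itself.
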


For example, $w(3, 5) = zy^2zy^2zy$ and
$w(3, 10) = zy^4zy^3zy^3$.
One can write the word $w(m, n)$ quickly using consecutive $m$ black points for $z$'s and $n$ white points for $y$'s on a circle.
Starting at the first black one, move to the $m$-th point in the clockwise direction and so on.
And then after $m+n-1$ moves, we have $w(m, n)$ (see Figure \ref{primitive} (a)).

\begin{figure}
\centering
\includegraphics{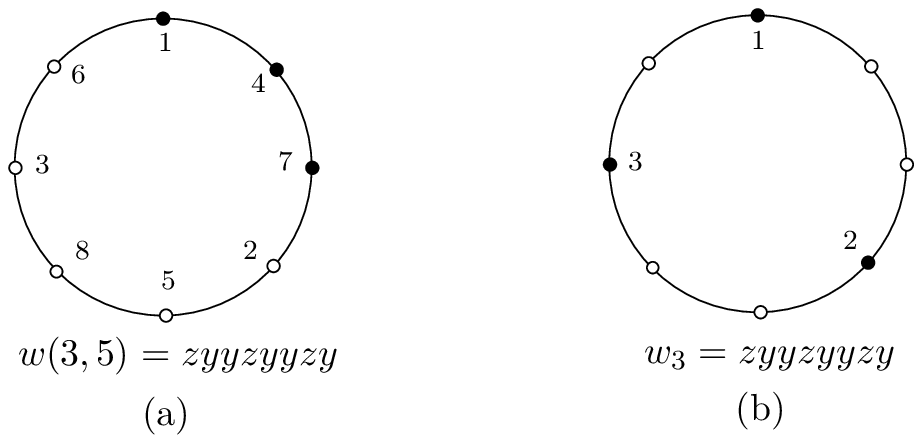}
\caption{}
\label{primitive}
\end{figure}

Let $\{z, y\}$ be a generating pair of the free group of rank two.
Given relatively prime integers $p$ and $q$ with $0 < q <p$, we define a sequence of $(p+1)$ elements $w_0, w_1, \cdots, w_{p-1}, w_p$ in term of $z$ and $w$ as follows.

Define first $w_0$ to be $y^p$.
For each $j \in \{1, 2, \cdots, p\}$, let $f_j:\mathbb Z \rightarrow \{z, y\}$ be the function given by
$$f_j(i)=
\begin{cases}
~z &  \ \text{if ~$i \equiv 1, 1+q, 1+2q, \cdots, 1+(j-1)q \pmod{p}$} \\
~y &  \ \text{otherwise,}
\end{cases}
$$
and then define $w_j = f_j(1)f_j(2)\cdots f_j(p)$.
Each of $w_j$ has length $p$ and consists of $j$ $z$'s and $(p-j)$ $y$'s.
In particular, $w_1 = zy^{p-1}$, $w_{p-1} = z^{p-q}yz^{q-1}$ and $w_p = z^p$.
We call the sequence $w_0, w_1, \cdots w_p$ the {\it $(p, q)$-sequence} of the pair $(z, y)$.
An easy way to write down the word $w_j$ is to put $p$ white points on a circle, and then change one to a black point, and change the $q$-th white one to black in clockwise direction and so one. Each black is $z$ and white $y$, and after $j$ changes, we have $w_j$ (see Figure \ref{primitive} (b), $w_3$ in $(8, 3)$-sequence).
For example, the $(8, 3)$-sequence is given by
\begin{alignat*}{3}
w_0 &= yyyyyyyy &\qquad
w_1 &= zyyyyyyy &\qquad
w_2 &= zyyzyyyy \\
w_3 &= zyyzyyzy &\qquad
w_4 &= zzyzyyzy &\qquad
w_5 &= zzyzzyzy \\
w_6 &= zzyzzyzz &\qquad
w_7 &= zzzzzyzz &\qquad
w_8 &= zzzzzzzz
\end{alignat*}
Observe that $w_{p-j}$ is a cyclic permutation of $\overline{\psi(w_j)}$ for each $j$, where $\psi$ is the automorphism exchanging $z$ and $y$, and $\overline{w}$ is the reverse of $w$.
Thus $w_j$ is primitive if and only if $w_{p-j}$ is primitive.
We can find all primitive elements in the sequence as follows.

\begin{lemma}[Four Primitives Lemma]
Let $w_0, w_1, \cdots, w_p$ be the $(p, q)$-sequence of the generating pair $\{z, y\}$ with $0< q < p$.
Let $q'$ be the unique integer satisfying $1 \leq q' \leq p/2$ with $qq' \equiv \pm 1 \pmod{p}$.
Then $w_j$ is primitive if and only if $j \in \{1, q', p-q', p-1\}$.
\label{lem:four_primitives}
\end{lemma}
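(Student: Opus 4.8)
The plan is to reduce the statement, via Lemma~\ref{lem:primitive}, to a clean combinatorial question about arithmetic progressions in $\mathbb Z/p\mathbb Z$, and then to resolve that question by a counting argument.

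First I would use the symmetry recorded just before the statement: since $w_{p-j}$ is a cyclic permutation of $\overline{\psi(w_j)}$, the word $w_j$ is primitive if and only if $w_{p-j}$ is. Hence it suffices to decide primitivity for $1 \le j \le p/2$ and to show that in this range the primitive words are exactly $w_1$ and $w_{q'}$; the remaining two, $w_{p-1}$ and $w_{p-q'}$, then follow automatically. The case $j=1$ is immediate, since $w_1 = zy^{p-1} = w(1,p-1)$ is primitive, so the real work is to prove that for $2 \le j \le p/2$ the word $w_j$ is primitive precisely when $j = q'$.

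For such $j$ we have $1 \le j \le p-j$, so Lemma~\ref{lem:primitive} applies with $m=j$ and $n=p-j$: $w_j$ is primitive if and only if $\gcd(j,p)=1$ and $w_j$ is a cyclic permutation of the balanced word $w(j,p-j)$. I would then translate this into a statement about the positions of the $z$'s. Reading off the definitions, the $z$'s of $w_j$ occupy the set $A_j=\{0,q,2q,\dots,(j-1)q\}$, while, after the elementary observation that $\ell m \equiv r$ is equivalent to $\ell \equiv r\,m^{-1}$, those of $w(j,p-j)$ occupy $B_j=\{0,j^{-1},2j^{-1},\dots,(j-1)j^{-1}\}$, both taken in $\mathbb Z/p\mathbb Z$. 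Because a cyclic permutation of the word corresponds exactly to a translation of the position set (and the balanced word of given slope is unique up to cyclic permutation), primitivity becomes the condition that $A_j$ be a translate of $B_j$. Writing $A_j=q\,I$ and $B_j=j^{-1}I$ with $I=\{0,1,\dots,j-1\}$ and multiplying through by the unit $j$, this is equivalent to requiring that the arithmetic progression $\{0,s,2s,\dots,(j-1)s\}$, where $s\equiv jq \pmod p$, be a translate of the interval $I$.

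The crux is then the following elementary fact, which I would isolate and prove: for $2 \le N \le p/2$ and $s$ invertible mod $p$, the progression $\{0,s,2s,\dots,(N-1)s\}$ is a set of $N$ consecutive residues if and only if $s\equiv\pm 1\pmod p$. One direction is clear. For the other I would count cyclically adjacent pairs: an interval of length $N$ has exactly $N-1$ ordered pairs $(x,x+1)$ both in the set, while in the progression such a pair corresponds to indices $k,k'$ with $(k'-k)s\equiv 1$; letting $e$ be the representative of $s^{-1}$ in $(-p/2,p/2]$, the constraint $2N\le p$ rules out any wrap-around, so the number of such pairs is exactly $N-|e|$. Equating $N-|e|=N-1$ forces $|e|=1$, that is $s\equiv\pm 1$. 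Applying this with $s\equiv jq$, primitivity for $2\le j\le p/2$ forces $jq\equiv\pm 1\pmod p$, and by the defining property and uniqueness of $q'$ in $[1,p/2]$ the only such $j$ is $q'$, which completes the argument. I expect this counting lemma to be the main obstacle, as everything preceding it is bookkeeping with Lemma~\ref{lem:primitive}; the one subtlety to watch is reading that lemma correctly as ``cyclic permutation of $w(m,n)$,'' together with the uniqueness of the balanced word that legitimizes the reformulation in terms of translates.
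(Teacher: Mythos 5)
Your proposal is correct, and its core mechanism differs from the paper's. Both arguments start the same way: reduce to $1\le j\le p/2$ via the symmetry $w_{p-j}\sim\overline{\psi(w_j)}$, dispose of $j=1$ by inspection, and read Lemma~\ref{lem:primitive} as ``$w_j$ is primitive iff $(j,p)=1$ and $w_j$ is a cyclic permutation of $w(j,p-j)$.'' From there the paper splits into two claims handled by different means: it verifies $w_{q'}\sim w(q',p-q')$ by an explicit formula relating $f_{q'}$ to $g_{q',p-q'}$, and for $1<j\le p/2$, $j\ne q'$ it assumes a cyclic identification $f_j(i)=g(1+(i-1+k)j)$, extracts the residues $a_n\in\{1,\dots,j\}$ with $a_{n+1}\equiv a_n+r$ where $r\equiv qj$, and shows the sequence $(a_n)$ must be monotone, forcing $r\equiv\pm1$ and a contradiction. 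You instead encode both directions in one equivalence --- the $z$-position sets $qI$ and $j^{-1}I$ are translates iff the progression $\{0,s,\dots,(j-1)s\}$ with $s\equiv jq$ is an interval --- and settle that by counting cyclically adjacent pairs, using $2j\le p$ to prevent wrap-around; this yields $s\equiv\pm1$ as an iff, so the positive case $j=q'$ and the exclusion of all other $j$ come out of the same lemma. The two proofs are essentially equal in length and elementarity (your pair count $N-|e|$ versus the paper's monotonicity of $(a_n)$ are close cousins, both exploiting that $j\le p/2$), but yours is somewhat more conceptual and avoids the separate Claim~1 computation; the only points needing care, which you flag correctly, are the ``cyclic permutation $=$ translation of position sets'' dictionary and the observation that the adjacent-pair count is $\max(N-|e|,0)$, which still forces $|e|=1$ when equated with $N-1\ge 1$.
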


\begin{proof}
It is clear that $w_1$ and $w_{p-1}$ are primitive while $w_0$ and $w_p$ are not.

\medskip

{\noindent \sc Claim 1.} $w_{q'}$ is primitive.\\
{\it Proof of Claim 1.}
We write $w_{q'} = f_{q'}(1)f_{q'}(2) \cdots f_{q'}(p)$, and $w(q', p-q') = g(1)g(1+q')g(1+2q')\cdots g(1+(p-1)q')$ where $g = g_{q', p-q'}$ in the notation in Lemma \ref{lem:primitive}.
Since $f(i) = z$ if and only if $i \equiv 1+nq \pmod{p}$ for some $n \in \{0, 1, \cdots, q'-1\}$, it can be directly verified that
$$f_{q'}(i)=
\begin{cases}
 g(1+(i-1)q')&  \ \text{if~ $qq' \equiv 1 \pmod{p}$} \\
 g(1+(i+q)q')&  \ \text{if~ $qq' \equiv -1 \pmod{p}$}.
\end{cases}
$$
Thus $w_{q'}$ is $w(q', p-q')$ itself if $qq' \equiv 1 \pmod{p}$ or is a cyclic permutation of it if $qq' \equiv -1 \pmod{p}$.
In either cases, $w_{q'}$ is primitive.

\medskip

{\noindent \sc Claim 2.} If $1 < j \leq p/2$ and $j \neq q'$, then $w_j$ is not primitive.\\
{\it Proof of Claim 2.}
From the assumption, there is a unique integer $r$ satisfying $2 \leq r \leq p-2$ and $qj \equiv r \pmod{p}$.
Suppose, for contradiction, that $w_j$ is primitive.
Then, by Lemma \ref{lem:primitive}, $(p, j) = 1$ and $w_j$ is a cyclic permutation of $w(j, p-j)$.
We write $w_j = f_j(1) f_j(2) \cdots  f_j(p)$ and $w(j, p-j) = g(1)g(1+j)g(1+2j) \cdots g(1+(p-1)j)$ where $g = g_{j, p-j}$ as in Lemma \ref{lem:primitive}.
Then there is a constant $k$ such that $f_j(i) = g(1+ (i-1+k)j)$ for all $i \in \mathbb Z$.
In particular, $f_j(1 + nq) = z = g(1+(nq + k)j)$ for each $ n \in \{0, 1, \cdots, j-1\}$.

From the definition of $g = g_{j, p-j}$ and the choice of the integer $r$, we have $1+(nq + k)j \equiv 1+nr+kj \equiv 1, 2, \cdots, j \pmod{p}$.
Let $a_n$ be the unique integer satisfying $1+nr + kj \equiv a_n$ with $a_n \in \{1, 2, \cdots, j\}$ for each $n \in \{0, 1, \cdots, j-1\}$.
Observe that $a_n + r \equiv a_{n+1}$ for each $n \in \{0, 1, \cdots, j-2 \}$, and in particular, $a_0 + r \equiv a_1$.
Since $1 \leq a_0 \leq j < p$ and $2 \leq r \leq p-2 < p$, we have only two possibilities: either $a_0 + r = a_1$ or $a_0 + r = a_1 + p$.

First consider the case  $a_0 + r = a_1$.
Then $r \leq j-1$ and $a_n < a_{n+1}$, consequently $a_0 =1, a_1 = 2, \cdots, a_{j-1} = j$, which implies $r=1$, a contradiction.
Next, if $a_0 + r = a_1 + p$, then $p+1-j \leq r$ and $a_n > a_{n+1}$, thus we have $a_0 = j, a_1 = j-1, \cdots, a_{j-1} = 1$, and consequently $r = p-1$, a contradiction again.

\medskip

By the claims, if $1 \leq j \leq p/2$, then $w_j$ is primitive only when $j =1$ or $j=q'$.
If $p/2 \leq j \leq p$, due to the fact that $w_{p-j}$ is a cyclic permutation of $\overline{\psi(w_j)}$, the only primitive elements are $w_{p-q'}$ and $w_{p-1}$, which completes the proof.
\end{proof}

A simple closed curve in the boundary of a genus two handlebody $W$ represents elements of $\pi_1(W) = \mathbb Z \ast \mathbb Z$.
We call a pair of essential disks in $W$ a {\it complete meridian system} for $W$ if the union of the two disks cuts off $W$ into a $3$-ball.
Given a complete meridian system $\{D, E\}$, assign symbols $x$ and $y$ to the circles $\partial D$ and $\partial E$ respectively.
Suppose that an oriented simple closed curve $l$ on $\partial W$ that meets $\partial D \cup \partial E$ transversely and minimally.
Then $l$ determines a word in terms of $x$ and $y$ which can be read off from the the intersections of $l$ with $\partial D$ and $\partial E$ (after a choice of orientations of $\partial D$ and $\partial E$), and hence $l$ represents an element of the free group $\pi_1 (W) = \left< x, y\right>$.

In this set up, the following is a simple criterion for the primitiveness of the elements represented by such simple closed curves.

\begin{lemma}
With a suitable choice of orientations of $\partial D$ and $\partial E$, if a word corresponding to a simple closed curve $l$ contains one of the pairs of terms$:$ $(1)$  both of $xy$ and $xy^{-1}$ or $(2)$  both of $xy^nx$ and $y^{n+2}$ for $n \geq 0$, then the element of $\pi_1 (W)$ represented by $l$ cannot be $($a positive power of $)$ a primitive element.
\label{lem:key}
\end{lemma}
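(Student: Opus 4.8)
The plan is to read off from $l$ a cyclically reduced word and then contradict the structural description of primitive elements recalled at the beginning of this section. The first step is to observe that, since $l$ meets $\partial D \cup \partial E$ transversely and minimally, the word $w$ it determines in $\langle x, y\rangle$ is cyclically reduced: a cancelling pair $xx^{-1}$ or $yy^{-1}$ occurring (cyclically) in $w$ would exhibit a bigon between $l$ and one of the meridians, and removing it would lower the geometric intersection number, contradicting minimality. Hence $w$ is the cyclically reduced representative of the conjugacy class of $\pi_1(W)$ carried by $l$, well defined up to cyclic permutation and inversion; since being (a positive power of) a primitive element is a property of the conjugacy class, it suffices to show that $w$ is not such a word.

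Next I would record the two structural facts I need about a positive power $p^{k}$ ($k \ge 1$) of a primitive element. Replacing $p$ by a cyclically reduced conjugate, $p^{k}$ is again cyclically reduced and is a cyclic permutation of $w$, so the cyclic multiset of letters of $w$ coincides with that of $p^{k}$. Therefore: (i) by the consequence of the structure theorem recalled above, $w$ cannot contain both $y$ and $y^{-1}$, nor both $x$ and $x^{-1}$; and (ii) if in addition $w$ is a positive word possessing a run $y^{2}$, then $y$ must be the generator occurring in blocks (it is not isolated), so by Lemma~\ref{lem:primitive} the $x$'s occur as isolated letters and the cyclic runs of $y$'s between consecutive $x$'s all have length lying in a two-element set $\{t, t+1\}$ with $t \ge 1$; the same holds for $p^{k}$, whose runs are those of $p$ repeated.

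Case~(1) is then immediate: if $w$ contains both $xy$ and $xy^{-1}$ it contains the letters $y$ and $y^{-1}$, violating (i), so $w$ is not a positive power of a primitive. For Case~(2), suppose $w$ contains $xy^{N}x$ and $y^{N+2}$ with $N \ge 0$, and assume for contradiction that $w$ is a positive power of a primitive. Since $w$ then contains the positive letters $x$ and $y$, fact (i) forces $w$ to contain no $x^{-1}$ and no $y^{-1}$, i.e.\ $w$ is positive; and $y^{N+2}$ is a run $y^{2}$, so fact (ii) applies. But $xy^{N}x$ exhibits a cyclic run of $y$'s of length exactly $N$ between two $x$'s, while $y^{N+2}$ lies in a run of length $\ge N+2$; as $N$ and a value $\ge N+2$ cannot both belong to $\{t, t+1\}$ (and for $N=0$ the length $0$ already violates $t \ge 1$), this contradicts (ii). Hence $w$ is not a positive power of a primitive in either case.

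The step I expect to be the main obstacle is the careful set-up rather than the final combinatorics: one must be sure that the word read off the curve is genuinely the cyclically reduced representative (which is exactly where minimality of the intersection is used) and that the two forbidden patterns translate correctly into cyclic statements about $w$. In particular, the translation in Case~(2) relies on first ruling out inverse letters via (i), then using the long run $y^{N+2}$ to identify $y$ — not $x$ — as the generator appearing in blocks, so that Lemma~\ref{lem:primitive} constrains precisely the $y$-runs between consecutive $x$'s; and the degenerate value $N=0$, where $xy^{N}x$ reads as $xx$, must be checked separately.
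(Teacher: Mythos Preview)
Your argument has a genuine gap in its very first step. You assert that minimal intersection of $l$ with $\partial D\cup\partial E$ forces the word $w$ to be cyclically reduced, on the grounds that a cancelling pair ``would exhibit a bigon''. But a cancelling pair $xx^{-1}$ corresponds to an arc of $l\cap\Sigma'$ with both endpoints on the same boundary circle (say $d_+$) of the $4$-holed sphere $\Sigma'$, and such an arc need not be boundary-parallel: it may be \emph{essential} in $\Sigma'$, separating off one of $d_-,e_+,e_-$, and then it cobounds no disk in $\partial W$ with a subarc of $\partial D$. Concretely, take $l$ disjoint from $\partial E$ whose two arcs in $\Sigma'$ run from $d_+$ to $d_+$ around $e_+$ and from $d_-$ to $d_-$ around $e_-$; this $l$ is simple, meets $\partial D$ in exactly two points with no bigon, yet reads the non-reduced word $xx^{-1}$. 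So minimality alone does not give cyclic reducedness, and without it your appeal to the structure of primitive words in~(i) and~(ii) does not get started.

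This is precisely the point where the paper's proof does real geometric work. From the hypothesis that the word contains $xy$ and $xy^{-1}$, one extracts arcs in $\Sigma'$ joining $d_+$ to $e_+$ and $d_+$ to $e_-$; the equalities $|l\cap d_+|=|l\cap d_-|$ and $|l\cap e_+|=|l\cap e_-|$ then force arcs from $d_-$ to $e_+$ and from $d_-$ to $e_-$ as well. These four arcs cut $\Sigma'$ into disks, and \emph{only then} does minimality rule out any arc returning to the same boundary circle; cyclic reducedness is thus a consequence of the hypothesis together with minimality, not of minimality alone. (Case~(2) with $n\ge 1$ is handled in the paper by an induction, replacing $E$ by a band-summed disk $E^\ast$ to reduce $n$.) Once cyclic reducedness is in hand, your run-length argument against the $\{t,t+1\}$ pattern is correct and indeed tidier than the paper's induction for Case~(2); but it cannot begin until the missing geometric step is supplied.
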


\begin{figure}
\centering
\includegraphics{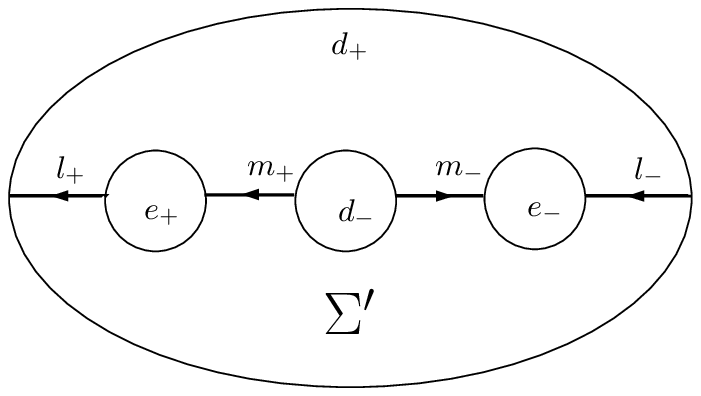}
\caption{}
\label{not_primitive}
\end{figure}

\begin{proof}
Let $\Sigma'$ be the $4$-holed sphere cut off from $\partial W$ along $\partial D \cup \partial E$.
Denote by $d_+$ and $d_-$ (by $e_+$ and $e_-$ respectively) the boundary circles of $\Sigma'$ that came from $\partial D$ (from $\partial E$ respectively).

Suppose first that $l$ represents an element of a form containing both $xy$ and $xy^{-1}$.
Then we may assume that there are two subarcs $l_+$ and $l_-$ of $l \cap \Sigma'$ such that $l_+$ connects $d_+$ and $e_+$, and $l_-$ connects $d_+$ and $e_-$ as in Figure \ref{not_primitive}.
Since $|~l \cap d_+| = |~l \cap d_-|$ and $|~l \cap e_+| = |~l \cap e_-|$, we must have two other arcs $m_+$ and $m_-$ of $l \cap \Sigma'$ such that $m_+$ connects $d_-$ and $e_+$, and $m_-$ connects $d_-$ and $e_-$.

Consequently, there exists no arc component of $l \cap \Sigma'$ that meets only one of $d_+$, $d_-$, $e_+$ and $e_-$.
That is, any word corresponding to $l$ contains neither $x^{\pm1} x^{\mp1}$ nor $y^{\pm1} y^{\mp1}$, and hence it is cyclically reduced.
Considering all possible directions of the arcs $l_+$, $l_-$, $m_+$ and $m_-$, each word represented by $l$ must contain both $x$ and $x^{-1}$ (or both $y$ and $y^{ -1}$), which means that $l$  cannot represent (a positive power of) a primitive element of $\pi_1 (W)$.

Next, suppose that a word corresponding to $l$ contains $x^2$ and $y^2$, which is the case of $n=0$ in the second condition.
Then there are two arcs $l_+$ and $l_-$ of $l \cap \Sigma'$ such that $l_+$ connects $d_+$ and $d_-$, and $l_-$ connects $e_+$ and $e_-$.
By a similar argument to the above, we see again that any word corresponding to $l$ is cyclically reduced, but contains both of $x^2$ and $y^2$.
Thus $l$ cannot represent (a positive power of) a primitive element.

Suppose that a word corresponding to $l$ contains $xy^nx$ and $y^{n+2}$ for $n \geq 1$.
Then there are two subarcs $\alpha$ and $\beta$ of $l$ which correspond to $xy^nx$ and $y^{n+2}$ respectively.
In particular, we may assume that $\alpha$ starts at $d_+$, intersects $\partial E$ in $n$ points, and ends in $d_-$, while $\beta$ starts at $e_+$, intersects $\partial E$ in its interior in $n$ points, and ends in $e_-$.

Let $m$ be the subarc of $\alpha$ corresponding to $xy$.
Then $m$ connects two circles $d_+$ and one of $e_{\pm}$, say $e_+$.
Choose a disk $E^\ast$ properly embedded in the $3$-ball $W$ cut off by $D \cup E$ such that the boundary circle $\partial E^\ast$ is the frontier of a regular neighborhood of $d_+ \cup m \cup e_+$ in $\Sigma'$.
Then $E^\ast$ is a non-separating disk in $W$ and forms a complete meridian system with $D$.
Assigning the same symbol $y$ to $\partial E^\ast$, the arc $\alpha$ determines $xy^{n-1}x$ while $\beta$ determines $y^{n+1}$.
Thus the conclusion follows by induction.
\end{proof}

\section{Primitive disks in a handlebody}
\label{sec:primitive_disks}

Recall that $(V, W; \Sigma)$ denotes a genus two Heegaard splitting of a lens space $L = L(p, q)$ with $0 < q < p$.
An essential disk $E$ in $V$ is called {\it primitive} if there exists an essential disk $E'$ in $W$ such that $\partial E$ intersects $\partial E' $ transversely in a single point.
Such a disk $E'$ is called a {\it dual disk} of $E$, which is also primitive in $W$ having a dual disk $E$.
Note that both $W \cup \Nbd(E)$ and $V \cup \Nbd(E')$ are solid tori.
Primitive disks are necessarily non-separating.
We call a pair of disjoint, non-isotopic primitive disks in $V$ a {\it primitive pair} in $V$.
Similarly, a triple of pairwise disjoint, non-isotopic primitive disks is a {\it primitive triple}.

A non-separating disk $E_0$ properly embedded in $V$ is called {\it semiprimitive} if there is a primitive disk $E'$ in $W$ disjoint from $E_0$.
The boundary circle $\partial E_0$ can be considered as a $(p, \bar q)$-curve on the boundary of the solid torus $W$ cut off by $E'$ for some integer $\bar q$ satisfying the relation $\bar q q^{\pm 1} \equiv \pm 1 \pmod{p}$.

Any simple closed curve on the boundary of the solid torus $W$ represents an element of $\pi_1 (W)$ which is the free group of rank two.
We interpret primitive disks algebraically as follows, which is a direct consequence of \cite{Go}.

\begin{lemma}
Let $D$ be a non-separating disk in $V$.
Then $D$ is primitive if and only if $\partial D$ represents a primitive element of $\pi_1 (W)$.
\label{lem:primitive_element}
\end{lemma}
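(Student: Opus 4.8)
The plan is to transfer the entire question onto the genus two handlebody $W$. Since $D$ is non-separating in $V$, its boundary $\partial D$ is a non-separating simple closed curve on $\Sigma = \partial V = \partial W$. Both conditions in the statement---that $D$ be primitive and that $\partial D$ represent a primitive element of $\pi_1(W)$---depend only on this curve together with the handlebody $W$, the disk $D \subset V$ serving merely to provide the curve. So I would first make this observation and then establish the two implications separately.

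For the forward implication, suppose $D$ is primitive, with dual disk $E'$ in $W$, so that $|\partial D \cap \partial E'| = 1$. Because $E'$ is non-separating, I would extend it to a complete meridian system $\{E', F'\}$ of $W$ and assign the symbols $x$ and $y$ to $\partial E'$ and $\partial F'$, giving a generating pair $\{x, y\}$ of $\pi_1(W)$. Reading the word represented by $\partial D$ off its intersections with $\partial E' \cup \partial F'$, the single intersection with $\partial E'$ forces exactly one occurrence of $x^{\pm 1}$; moreover any product of $y^{\pm 1}$ letters collapses to a single power of $y$, so after a choice of basepoint $\partial D$ represents a word of the form $w = y^a x^{\epsilon} y^b$ with $\epsilon \in \{1,-1\}$. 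Since $x^{\epsilon} = y^{-a} w \, y^{-b}$, the pair $\{w, y\}$ generates $\langle x, y \rangle = \pi_1(W)$ and is therefore a basis of the rank two free group. Thus $[\partial D]$ is primitive.

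For the reverse implication, suppose $[\partial D]$ is a primitive element of $\pi_1(W)$. Here the task is to produce an embedded disk $E'$ in $W$ meeting $\partial D$ in a single point, that is, to upgrade a purely group-theoretic statement about a basis element to a geometric statement about meridian disks of $W$. This is exactly the content supplied by \cite{Go}: for a non-separating simple closed curve on the boundary of a handlebody, representing a primitive element of the fundamental group is equivalent to admitting such a dual meridian disk (equivalently, to the property that attaching a two-handle to $W$ along the curve produces a solid torus). Applying this with the curve $\partial D$ yields the desired dual disk $E'$, and the proof concludes.

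I expect the reverse implication to be the only genuine obstacle. The forward implication is elementary once a complete meridian system containing the dual disk is chosen, whereas the passage from algebraic primitivity to an actual embedded dual disk requires the handlebody-theoretic input of \cite{Go}; without it one knows only that $\pi_1(W)/\langle\langle [\partial D]\rangle\rangle \cong \mathbb{Z}$, which by itself does not force the manifold obtained by the two-handle attachment to be a solid torus.
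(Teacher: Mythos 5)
Your proposal is correct and matches the paper, which proves this lemma simply by citing \cite{Go}; the hard direction (from algebraic primitivity of $[\partial D]$ in $\pi_1(W)$ to the existence of an embedded dual disk in $W$) is exactly Gordon's theorem, and your elementary argument for the forward direction is also valid. No discrepancies to report.
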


Note that no disk can be both primitive and semiprimitive since the boundary circle of a semiprimitive disk in $V$ represents the $p$-th power of a primitive element of $\pi_1(W)$.

Let $D$ and $E$ be essential disks in $V$, and suppose that $D$ intersects $E$ transversely and minimally.
Let $C \subset D$ be a disk cut off from $D$ by an outermost arc $\alpha$ of $D \cap E$ in $D$ such that $C \cap E= \alpha$.
We call such a $C$ an {\it outermost subdisk} of $D$ cut off by $D \cap E$.
The arc $\alpha$ cuts $E$ into two disks, say $G$ and $H$.
Then we have two disjoint disks $E_1$ and $E_2$ which are isotopic to disks $G \cup C$ and $H \cup C$ respectively.
We call $E_1$ and $E_2$ the {\it disks from surgery} on $E$ along the outermost subdisk $C$ of $D$.

Since $E$ and $D$ are assumed to intersect minimally, $E_1$ (and $E_2$) is isotopic to neither $E$ nor $D$.
Also both of $E_1$ and $E_2$ are non-separating if $D$ is non-separating.
Observe that each of $E_1$ and $E_2$ has fewer arcs of intersection with $D$ than $E$ had since at least the arc $\alpha$ no longer counts.

For an essential disk $D$ in $V$ intersecting  transversely and minimally the union of two disjoint essential disks $E$ and $F$, we define similarly the disks from surgery on $E \cup F$ along an outermost subdisk of $D$ cut off by $D \cap (E \cup F)$.
In this case, if $D$ is non-separating and $\{E, F\}$ is a complete meridian system of $V$, then one of the two disks is isotopic to $E$ or $F$ and the other one to neither of $E$ and $F$.

\begin{lemma}
Let $\{D, E\}$ be a primitive pair of $V$.
Then $D$ and $E$ have a common dual disk if and only if there is a semiprimitive disk $E_0$ in $V$ disjoint from $D$ and $E$.
\label{lem:common_dual}
\end{lemma}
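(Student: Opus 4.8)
The plan is to transport the whole statement into the free group $\pi_1(W)=\langle x,y\rangle$ and read everything off from boundary words, using Lemma~\ref{lem:primitive_element} to pass between disks and primitive elements. I first record two standing facts. Since $D$ and $E$ are disjoint, non-isotopic and non-separating, the pair $\{D,E\}$ is a complete meridian system of $V$; it cuts $V$ into a $3$-ball and cuts $\Sigma$ into a four-holed sphere $\Sigma'$ with boundary circles $d_\pm,e_\pm$. Consequently any non-separating disk of $V$ disjoint from $D\cup E$ is isotopic to a band sum $E_0=D\natural E$ along a band in $\Sigma'$, and its boundary word satisfies $[\partial E_0]\sim[\partial E]^{-1}[\partial D]$ up to conjugacy. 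Also, a meridian disk $P$ of $W$ whose boundary meets $\partial D$ in a single point is a dual disk of $D$, and hence, being dual to a primitive disk, is itself primitive in $W$.

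For the forward implication I would start from a common dual disk $D'$, complete it to a meridian system $\{D',Q\}$ of $W$, and take the basis $\{x,y\}$ of $\pi_1(W)$ in which $x$-letters count intersections with $\partial D'$. Since $\partial D'$ meets each of $\partial D,\partial E$ once, each word carries a single $x$, so $[\partial D]=xy^{m}$ and $[\partial E]=xy^{n}$ after cyclic reduction. As $\{D,E\}$ cuts $V$ into a ball, $L$ is $W$ with two $2$-handles attached along $\partial D,\partial E$, whence $\pi_1(L)=\langle x,y\mid xy^{m},xy^{n}\rangle\cong\mathbb Z/|m-n|$, forcing $|m-n|=p$. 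I then take the band sum $E_0=D\natural E$ realizing $[\partial E_0]=[\partial E]^{-1}[\partial D]=y^{m-n}=y^{\mp p}$. This $E_0$ is non-separating, disjoint from $D$ and $E$, and its word contains no $x$, so it can be isotoped off $\partial D'$; since $D'$ is primitive in $W$ and disjoint from $E_0$, the disk $E_0$ is semiprimitive.

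For the converse I would begin with a semiprimitive disk $E_0$ disjoint from $D,E$ together with a primitive dual $E'\subset W$ satisfying $\partial E'\cap\partial E_0=\emptyset$. By the first paragraph $E_0$ is a band sum, so $[\partial E_0]\sim[\partial E]^{-1}[\partial D]$, while semiprimitivity gives $[\partial E_0]=z^{p}$ for a primitive $z$. Choosing a meridian system $\{E',F\}$ with basis $\{u,v\}$, the disjointness $\partial E'\cap\partial E_0=\emptyset$ means $[\partial E_0]$ has no $u$, so $z$ is conjugate to $v$; after adjusting the basis I may assume $[\partial E]^{-1}[\partial D]=v^{p}$, i.e. $[\partial D]=[\partial E]\,v^{p}$. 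It now suffices to show that $\{[\partial E],v\}$ is a generating pair of $\pi_1(W)$: granting this, $\{x,y\}:=\{[\partial E]v^{-n},v\}$ is a basis, both $[\partial D]$ and $[\partial E]$ carry a single $x$, and the meridian disk $D'$ dual to $x$ meets each of $\partial D,\partial E$ in exactly one point (the intersection number equals the $x$-syllable length in minimal position), yielding the common dual.

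The main obstacle is precisely this last step of the converse, namely establishing that $[\partial E]$ and $v$ form a generating pair. Here I expect to invoke the classification of primitive words recalled before Lemma~\ref{lem:primitive}, together with the hypotheses that both $[\partial E]$ and $[\partial D]=[\partial E]v^{p}$ are primitive and that $\partial D,\partial E$ are realized disjointly and disjoint from the $p$-th-power curve $\partial E_0$: an element $e$ with $e$ and $ev^{p}$ both primitive and $p\ge 2$ must, relative to a basis completing $v$, have a single syllable of the complementary generator, which is exactly the condition that $\{e,v\}$ be a basis. The delicate accompanying point is to control the geometric intersection numbers so that they are exactly one rather than merely minimal in a homotopy class, for which I would rely on minimal position and the outermost-disk surgery operations set up just before the lemma, checking that the algebraic normalization is matched by an honest isotopy of disks.
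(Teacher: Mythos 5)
Your forward direction is essentially a band-sum argument and is workable, but the standing identity $[\partial E_0]\sim[\partial E]^{-1}[\partial D]$ is not valid for an arbitrary band: the boundary word of a band sum is a product of a conjugate of $[\partial D]^{\pm1}$ with a conjugate of $[\partial E]^{\pm1}$, and which conjugates occur depends on how the band runs through the $4$-holed sphere. For necessity you can sidestep this by taking the band to be a neighborhood of one of the two arcs of $\partial D'\cap\Sigma'$; the resulting curve is then disjoint from $\partial D'$ and so bounds a semiprimitive disk disjoint from $D$ and $E$, which is presumably why the paper dismisses this direction as clear.

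The converse, however, has a genuine gap at exactly the step you flag as the main obstacle. The algebraic claim you propose to invoke --- that if $e$ and $ev^{p}$ are both primitive with $p\geq 2$ then $\{e,v\}$ must be a basis --- is false. Take $e=uv^{2}uv$ and $p=2$: then $e$ is a cyclic permutation of $w(2,3)=uv^{2}uv$ and $ev^{2}=uv^{2}uv^{3}$ is a cyclic permutation of $w(2,5)=uv^{3}uv^{2}$, so both are primitive by Lemma~\ref{lem:primitive}, yet $e$ and $v$ generate only an index-two subgroup (in the abelianization they span $2\bar{u}$ and $\bar{v}$). So primitivity of the two boundary words alone cannot produce the common dual; the embeddedness and mutual disjointness of $D$, $E$ and $E_0$ must enter in an essential way. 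Moreover, even your normal form $[\partial D]=[\partial E]v^{p}$ rests on the same faulty band-sum identity. The paper's proof is instead geometric: fixing the primitive disk $E'\subset W$ disjoint from $E_0$, it shows that \emph{every} primitive disk of $V$ disjoint from $E_0$ has $E'$ as a dual disk, by choosing among the primitive disks dual to $E'$ one minimizing the intersection with $D$ and performing outermost-arc surgery, with Lemma~\ref{lem:key} ruling out the arc configurations in the $2$-holed annulus $\Sigma_0$ that would prevent $\partial D$ from meeting $\partial E'$ in a single point. Some argument of this geometric kind appears unavoidable, and your proposal as written does not supply one.
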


\begin{proof}
The necessity is clear.
For sufficiency, let $E'$ be a primitive disk in $W$ disjoint from the semiprimitive disk $E_0$ in $V$.
It is enough to show that $E'$ is a dual disk of every primitive disk in $V$ disjoint from $E_0$, since then $E'$ would be a common dual disk of $D$ and $E$.

\smallskip

\noindent {\it Claim}: If $E$ is a primitive disk in $V$ dual to $E'$, then $E$ is disjoint from $E_0$.

\noindent {\it Proof of claim}.
Denote by $E_0^+$ and $E_0^-$ the two disks on the boundary of the solid torus $V$ cut off by $E_0$ that came from $E_0$.
Suppose that $E$ intersects $E_0$.
Then an outermost subdisk $C$ of $E$ cut off by $E \cap E_0$ must intersect $\partial E'$ since $\partial E'$ is a longitude of the solid torus $V$ cut off by $E_0$.
We may assume that $C$ is incident to $E_0^+$.
Considering $|E \cap E_0^+| = |E \cap E_0^-|$, there is a subarc of $\partial E$ whose two endpoints lie in $\partial E_0^-$, which also intersects $\partial E'$, and hence $\partial E$ intersects $\partial E'$ at least in two points, a contradiction.

\smallskip

Let $D$ be a primitive disk in $V$ disjoint from $E_0$.
Among all the primitive disks in $V$ dual to $E'$, choose one, denoted by $E$ again, such that $|D \cap E|$ is minimal.
By the claim, $E$ is disjoint from $E_0$.
Let $E'_0$ be the unique semiprimitive disk in $W$ disjoint from $E \cup E'$.
Since $\{E', E'_0\}$ forms a complete meridian system of $W$, by assigning symbols $x$ and $y$ to oriented $\partial E'$ and $\partial E'_0$ respectively, any oriented simple closed curve on $\partial W$ represents an element of the free group $\pi_1 (W) = \langle x, y \rangle$ as in the previous section.
In particular, we may assume that $\partial E$ and $\partial E_0$ represents elements of the form $x$ and $y^p$ respectively.

Denote by $\Sigma_0$ the $4$-holed sphere $\partial V$ cut off by $\partial E \cup \partial E_0$.
Consider $\Sigma_0$ as a $2$-holed annulus with two boundary circles $\partial E_0^\pm$ came from $\partial E_0$ and with two holes $\partial E^\pm$ came from $\partial E$.
Then $\partial E'_0$ is the union of $p$ spanning arcs in $\Sigma_0$ which divides $\Sigma_0$ into $p$ rectangles, and the two holes $\partial E^\pm$ is contained in a single rectangle.
Notice that $\partial E'$ is an arc in the rectangle connecting the two holes.
See Figure \ref{Sigma_0} (a).

\begin{figure}
\centering
\includegraphics[width=10cm,clip]{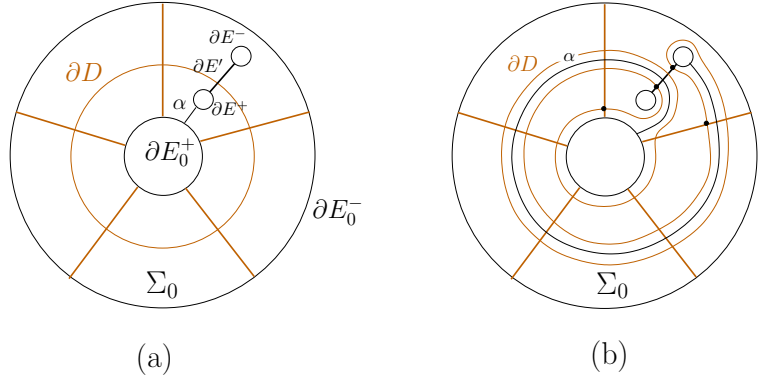}
\caption{The $2$-holed annulus $\Sigma_0$ when $p = 5$, for example.}
\label{Sigma_0}
\end{figure}

Suppose that $D$ is disjoint from $E$.
Then $D$ is a non-separating disk in $V$ disjoint from $E \cup E_0$, and hence the boundary circle $\partial D$ can be considered as the frontier of a regular neighborhood in $\Sigma_0$ of the union of one of the two boundary circles, one of the two holes of $\Sigma_0$, and an arc $\alpha$ connecting them.
The arc $\alpha$ cannot intersect $\partial E'_0$ in $\Sigma_0$, otherwise an element represented by $\partial D$ must contain both of $xy$ and $xy^{-1}$ (after changing orientations if necessary), which contradicts that $D$ is primitive by Lemma \ref{lem:key} (see Figure \ref{Sigma_0} (b)).
Thus $\alpha$ is disjoint from $\partial E'_0$, and consequently $D$ intersects $\partial E'$ in a single point.
That is, $E'$ is a dual disk of $D$ (see Figure \ref{Sigma_0} (a)).

Suppose next that $D$ intersects $E$.
Let $C$ be an outermost subdisk of $D$ cut off by $D \cap E$.
Then one of the resulting disks from surgery on $E$ along $C$ is $E_0$ and the other one, say $E'$, is isotopic to none of $E$ and $E_0$.
The arc $\partial C \cap \Sigma_0$ can be considered as the frontier of a regular neighborhood of the union of a boundary circle of $\Sigma_0$ came from $\partial E_0$ and an arc, denoted by $\alpha_0$, connecting this circle and a hole came from $\partial E$.
By a similar argument to the above, one can show that $\alpha_0$ is disjoint from $\partial E'_0$, otherwise $D$ would not be primitive.
Consequently, the boundary circle of the resulting disk $E_1$ from the surgery intersects $\partial E'$ in a single point, which means $E_1$ is primitive with the dual disk $E'$.
But we have $|D \cap E_1| < |D \cap E|$ from the surgery construction, which contradicts the minimality of $|D \cap E|$.
\end{proof}

In the proof of Lemma \ref{lem:common_dual}, if we assume that the primitive disk $D$ also intersects $E_0$, then the subdisk $C$ of $D$ cut off by $D \cap (E \cup E_0)$ would be incident to one of $E$ and $E_0$. The argument to show that the resulting disk $E_1$ from the surgery is primitive with the dual disk $E'$ still holds when $C$ is incident to $E_0$ and even when $D$ is semiprimitive. This observation suggests the following lemma.

\begin{lemma}
Let $E_0$ be a semiprimitive disk in $V$ and let $E$ be a primitive disk in $V$ disjoint from $E_0$.
If a primitive or semiprimitive disk $D$ in $V$  intersects $E \cup E_0$, then one of the disks from surgery on $E \cup E_0$ along an outermost subdisk of $D$ cut off by $D \cap (E \cup E_0)$ is either $E$ or $E_0$, and the other one, say $E_1$, is a primitive disk, which has a common dual disk with $E$.
\label{lem:first_surgery}
\end{lemma}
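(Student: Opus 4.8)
The plan is to re-run the sufficiency argument of Lemma~\ref{lem:common_dual}, now permitting $D$ to meet $E_0$ as well as $E$ and permitting $D$ to be semiprimitive. First I would fix a primitive disk $E'$ in $W$ disjoint from $E_0$; by the proof of Lemma~\ref{lem:common_dual}, such an $E'$ is a dual disk of every primitive disk in $V$ disjoint from $E_0$, and in particular of $E$. Let $E'_0$ be the unique semiprimitive disk in $W$ disjoint from $E \cup E'$, so that $\{E', E'_0\}$ is a complete meridian system of $W$ and determines a free basis $\{x, y\}$ of $\pi_1(W)$ with $\partial E$ representing $x$ and $\partial E_0$ representing $y^p$. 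This reproduces the concrete model used before: $\{E, E_0\}$ is a complete meridian system of $V$, the $4$-holed sphere $\Sigma_0 = \partial V \setminus (\partial E \cup \partial E_0)$ is a $2$-holed annulus, $\partial E'_0$ is a union of $p$ spanning arcs cutting $\Sigma_0$ into $p$ rectangles, both holes $\partial E^\pm$ lie in a single rectangle, and $\partial E'$ is an arc in that rectangle joining the two holes.

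Second, since $\{E, E_0\}$ is a complete meridian system of $V$ and $D$ is non-separating (every primitive or semiprimitive disk is non-separating), the fact recorded after the surgery construction shows that the two disks from surgery on $E \cup E_0$ along the outermost subdisk $C$ are one disk isotopic to $E$ or $E_0$ and one disk $E_1$ isotopic to neither. The outermost arc $\alpha = C \cap (E \cup E_0)$ lies entirely in $E$ or entirely in $E_0$, so $C$ is incident to exactly one of the two disks, and $E_1$ is obtained by capping the appropriate half of $E$ (or of $E_0$) with $C$.

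Third --- the real content --- I would show that $\partial E_1$ meets $\partial E'$ in a single point, whence $E_1$ is primitive with dual disk $E'$ by Lemma~\ref{lem:primitive_element}. In either case the arc $\beta = \partial C \cap \Sigma_0$ is the frontier in $\Sigma_0$ of a regular neighborhood of an arc $\alpha_0$ joining a hole $\partial E^\pm$ to a boundary circle $\partial E_0^\pm$, together with one of the two endpoint curves of $\alpha_0$ (the circle $\partial E_0^\pm$ when $C$ is incident to $E$, and the hole $\partial E^\pm$ when $C$ is incident to $E_0$). The crucial point is that $\alpha_0$ must be disjoint from $\partial E'_0$: if $\alpha_0$ crossed one of the $p$ spanning arcs of $\partial E'_0$, then, after adjusting orientations, a word representing $\partial D$ would contain both $xy$ and $xy^{-1}$, so by Lemma~\ref{lem:key} the class of $\partial D$ could not be a positive power of a primitive element. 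This is where the hypothesis on $D$ enters uniformly: if $D$ is primitive then $\partial D$ represents a primitive element, while if $D$ is semiprimitive then $\partial D$ represents the $p$-th power of a primitive element; in either case Lemma~\ref{lem:key} gives a contradiction. Once $\alpha_0 \cap \partial E'_0 = \emptyset$, the circle $\partial E_1$ crosses $\partial E'$ exactly once.

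Finally, since $E'$ is then a dual disk of both $E$ and $E_1$, it is a common dual disk of the two, which is the assertion. The step I expect to require the most care is the case in which $C$ is incident to $E_0$ rather than $E$, which does not occur in Lemma~\ref{lem:common_dual}: here the roles of the hole-circles $\partial E^\pm$ and the boundary circles $\partial E_0^\pm$ of $\Sigma_0$ are interchanged, and I would need to check that the forbidden-subword analysis still produces one of the patterns of Lemma~\ref{lem:key}, using the symmetry of the $2$-holed annulus model and the relation $\partial E_0 = y^p$. I would also need to confirm that the outermost-subdisk analysis is unaffected when $D$ meets $E$ and $E_0$ simultaneously, so that the relevant outermost subdisk may be incident to either disk.
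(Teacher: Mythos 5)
Your proposal is correct and follows essentially the same route as the paper, which itself presents this lemma as a direct extension of the proof of Lemma \ref{lem:common_dual}: the same $2$-holed-annulus model for $\Sigma_0$, the same forbidden-subword argument via Lemma \ref{lem:key}, together with the observation that Lemma \ref{lem:key} excludes positive powers of primitive elements and hence also covers semiprimitive $D$. The two extra cases you flag ($C$ incident to $E_0$, and $D$ semiprimitive) are precisely the ones the paper notes in the paragraph preceding the lemma.
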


\section{A sequence of disks generated by a dual pair}
\label{sec:sequence_of_disks}
In this section, we introduce a special sequence of disks in the handlebody $V$, which will play a key role in the following sections.
Again, we have a genus two Heegaard splitting $(V, W; \Sigma)$ of a lens space $L = L(p, q)$ with $0 < q < p$.
Let $E$ be a primitive disk in $V$ with a dual disk $E'$, and let $E_0$ and $E'_0$ be the unique semiprimitive disks in $V$ and $W$ respectively which are disjoint from $E \cup E'$.
We will assume that $\partial E'_0$ is a $(p, q)$-curve on the boundary of the solid torus $\cl(V- \Nbd(E))$ with $0 < q < p$.

Assigning symbols $x$ and $y$ to oriented $\partial E'$ and $\partial E'_0$ respectively as in the previous sections, any oriented simple closed curve on $\partial W$ represents an element of the free group $\pi_1 (W) = \langle x, y \rangle$.
We simply denote the circles $\partial E'$ and $\partial E'_0$ by $x$ and $y$ respectively.
The circle $y$ is disjoint from $\partial E$ and intersects $\partial E_0$ in $p$ points, and $x$ is disjoint from $\partial E_0$ and intersects $\partial E$ in a single point.
Thus we may assume that $\partial E_0$ and $\partial E$ determine the elements of the form $y^p$ and $x$ respectively.

Let $\Sigma_0$ be the $4$-holed sphere $\partial V$ cut off by $\partial E \cup \partial E_0$.
Denote by $e^\pm$ the boundary circles of $\Sigma_0$ came from $\partial E$ and similarly $e_0^\pm$ came from $\partial E_0$.
The $4$-holed sphere $\Sigma_0$ can be regarded as a $2$-holed annulus where the two boundary circles are $e^\pm_0$ and the two holes $e^\pm$.
Then the circle $y$ in $\Sigma_0$ is the union of $p$ spanning arcs which cuts the annulus into $p$ rectangles, and $x$ is a single arc connecting two holes $e^\pm$, where $x \cup e^\pm$ is contained in a single rectangle (see the surface $\Sigma_0$ in Figure \ref{sequence}).

\begin{figure}
\centering
\includegraphics{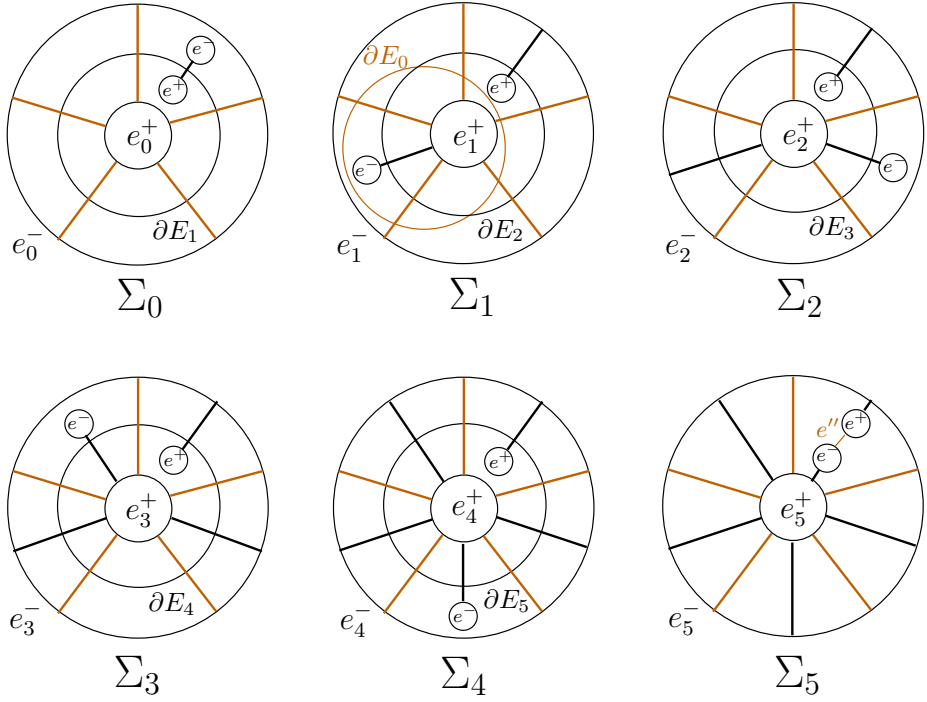}
\caption{A sequence of disks generated by a dual pair for $L(5, 3)$ where $\partial E_0'$ is a $(5, 3)$-curve.}
\label{sequence}
\end{figure}

Any non-separating disk in $V$ disjoint from $E \cup E_0$ and not isotopic to either of $E$ and $E_0$ is determined by an arc properly embedded in $\Sigma_0$ connecting one of $e^\pm$ and one of $e^\pm_0$.
That is, the boundary circle of such a disk is the frontier of a regular neighborhood of the union of the arc and the two circles connected by the arc in $\Sigma_0$.
Choose such an arc $\alpha_0$ so that $\alpha_0$ is disjoint from $y$, and denote by $E_1$ the non-separating disk determined by $\alpha_0$.
Observe that there are infinitely many choices of such arcs up to isotopy, and so are the disks $E_1$, but the element represented by $\partial E_1$ has one of the forms $x^{\pm 1} y^{\pm p}$, so we may assume that $\partial E_1$ represents $xy^p$ by changing the orientations if necessary.

Next, let $\Sigma_1$ be the $4$-holed sphere $\partial V$ cut off by $\partial E \cup \partial E_1$.
As in the case of $\Sigma_0$, consider $\Sigma_1$ as a $2$-holed annulus with boundaries $e^\pm_1$ and with two holes $e^\pm$ where $e^\pm_1$ came from $\partial E_1$.
Then the circle $y$ cuts off $\Sigma_1$ into $p$ rectangles as in the case of $\Sigma_0$, but two holes $e^+$ and $e^-$ are now contained in different rectangles.
In particular, we can give labels $0,1,\ldots, p-1$ to the rectangles consecutively so that $e^+$ lies in the rectangle labeled by $0$ while $e^-$ in that by $q$.
The circle $x$ in $\Sigma_1$ is the union of two arcs connecting $e^\pm_1$ and $e^\pm$ contained in the rectangles labeled by $0$ and $p$ respectively.

Now consider a properly embedded arc in $\Sigma_1$ connecting one of $e^\pm$ and one of $e_1^\pm$.
Choose such an arc $\alpha_1$ so that $\alpha_1$ is disjoint from $y$ and parallel to none of the two arcs of $x \cap \Sigma_1$.
Then $\alpha_1$ determines a non-separating disk, denoted by $E_2$, whose boundary circle is the frontier of a regular neighborhood of the union of $\alpha_1$ and the two circles connected by $\alpha_1$.
(If $\alpha_1$ is isotopic to one of the two arcs $x \cap \Sigma_1$, then the resulting disk is $E_0$.)
Observe that $\partial E_2$ represents an element of the form $xy^q xy^{p-q}$ (see the surface $\Sigma_1$ in Figure \ref{sequence}).

We continue this process in the same way.
Then $\Sigma_2$ is the $4$-holed sphere $\partial V$ cut off by $\partial E \cup \partial E_2$, and we choose an arc $\alpha_2$ in $\Sigma_2$ disjoint from $y$ and parallel to none of the arcs $x \cap \Sigma_2$, which determines the disk $E_3$.
The boundary circle $\partial E_3$ represents an element of the form $xy^qxy^qxy^{p-2q}$ if $0 < q \leq p/2$ or $xy^{2q-p}xy^{p-q}xy^{p-q}$ if $p/2 <q <p$.
In general, we have a non-separating disk $E_j$ whose boundary circle lies in the $4$-holed sphere $\Sigma_{j-1}$.

We finish the process in the $p$-th step to have the disk $E_p$ whose boundary circle lies in $\Sigma_{p-1}$.
The disk $E_{p-1}$ and $E_p$ represent elements of the form $(xy)^{p-q}y(xy)^{q-1}$ and $(xy)^p$ respectively.
Observe that there are infinitely many choices of the arc $\alpha_0$, and so choices of the disk $E_1$ as we have seen, but once $E_1$ have been chosen, the next disks $E_j$ for each $j \in \{1, 2, \cdots, p-1\}$ are uniquely determined.
We call the sequence of the disks $E_0, E_1, \cdots, E_{p-1}, E_p$ in $V$ a {\it sequence of disks} generated by the dual pair $\{E, E'\}$.

Note that $E_j$ is disjoint from $E_{j+1}$, and intersects $E_{j+2}$ in a single arc for each $j \in \{0, 1, \cdots, p-2\}$.
For example, see $\partial E_0$, $\partial E_2$ and $\partial E_1$ ($= e_1^{\pm}$) in $\Sigma_1$ in Figure \ref{sequence}.
In general, we have $|E_i \cap E_j| = j - i -1$ for $0 \leq i < j \leq p$.
This is obvious from the construction.
Figure \ref{intersection} illustrates intersections of $E_j$ with $E_{j+2}$, $E_{j+3}$ and $E_{j+4}$ in the $3$-balls $V$ cut off by $E \cup E_{j+1}$, $E \cup E_{j+2}$ and $E \cup E_{j+3}$ respectively.

\begin{figure}
\centering
\includegraphics[width=12.5cm,clip]{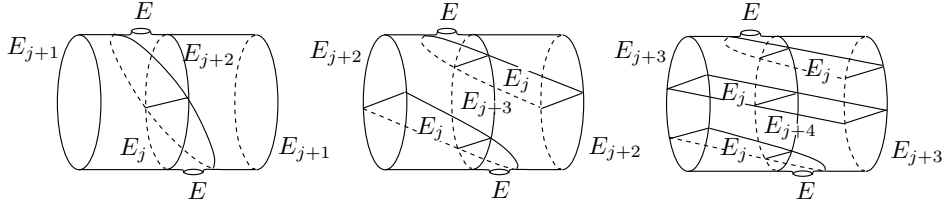}
\caption{Intersections of $E_j$ with $E_{j+2}$, $E_{j+3}$ and $E_{j+4}$}
\label{intersection}
\end{figure}

\begin{lemma}
Let $E_0, E_1, \cdots, E_{p-1}, E_p$ be a sequence of disks in $V$ generated by a dual pair $\{E, E'\}$.
Then we have
\begin{enumerate}
\item $E_0$ and $E_p$ are semiprimitive.
\item $E_j$ is primitive if and only if $j \in \{1, q', p-q', p-1\}$ where $q'$ is the unique integer satisfying $q'q \equiv \pm1 \pmod p$ and $1 \leq q' \leq p/2$.
\end{enumerate}
\label{lem:sequence}
\end{lemma}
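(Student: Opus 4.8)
The plan is to translate the geometry into Section~\ref{sec:free_group}'s algebra by computing, for every $j$, the element of $\pi_1(W)=\langle x,y\rangle$ carried by $\partial E_j$. Let $\sigma$ be the endomorphism of the rank-two free group determined by $z\mapsto xy$ and $y\mapsto y$. Since $x=(xy)y^{-1}$, the pair $\{xy,y\}$ is again a free basis, so $\sigma$ is in fact an isomorphism carrying the basis $\{z,y\}$ onto $\{xy,y\}$; in particular $\sigma$ carries primitive elements to primitive elements. The central claim I would prove is that $\partial E_j$ represents $\sigma(w_j)$, where $w_0,\dots,w_p$ is the $(p,q)$-sequence of $\{z,y\}$ introduced in Section~\ref{sec:free_group}.

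I would establish this claim by induction on $j$, reading the words directly off the construction in Section~\ref{sec:sequence_of_disks}. The base cases are immediate, since $\partial E_0$ represents $y^p=\sigma(w_0)$ and $\partial E_1$ represents $xy^p=\sigma(zy^{p-1})=\sigma(w_1)$. For the inductive step, passing from $w_j$ to $w_{j+1}$ changes a single $y$ into a $z$, at the position $1+jq \pmod p$; under $\sigma$ this amounts to inserting one new letter $x$ immediately before the corresponding $y$, and this is exactly the effect of the geometric passage from $E_j$ to $E_{j+1}$, where the arc $\alpha_j\subset\Sigma_j$ contributes precisely one additional intersection with $\partial E'=x$ while remaining disjoint from $y$. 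The explicit words already recorded in Section~\ref{sec:sequence_of_disks} for $j=2,3$ and for $j=p-1,p$ agree with $\sigma(w_2)$, $\sigma(w_3)$, $\sigma(w_{p-1})=(xy)^{p-q}y(xy)^{q-1}$ and $\sigma(w_p)=(xy)^p$, which confirms the recursion.

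Granting the claim, statement (2) is immediate: because $\sigma$ preserves primitivity, $\partial E_j=\sigma(w_j)$ is a primitive element of $\pi_1(W)$ if and only if $w_j$ is primitive in $\langle z,y\rangle$, and by the Four Primitives Lemma (Lemma~\ref{lem:four_primitives}) this occurs exactly when $j\in\{1,q',p-q',p-1\}$; since each $E_j$ is non-separating, Lemma~\ref{lem:primitive_element} converts this into the statement that $E_j$ is a primitive disk precisely for those $j$. For statement (1), $E_0$ is semiprimitive by construction, and $\partial E_p$ represents $(xy)^p$, the $p$-th power of the primitive element $xy$---the algebraic signature of a semiprimitive disk recorded in the Remark after Lemma~\ref{lem:primitive_element}. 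To upgrade this to genuine semiprimitivity I would produce a primitive disk of $W$ disjoint from $E_p$: since $E_p$ is disjoint from $E$, its boundary lies on the torus $\partial(W\cup\Nbd(E))$ and wraps $p$ times around the solid torus $W\cup\Nbd(E)$ (as $(xy)^p$ maps to $y^p$ in $\pi_1(W\cup\Nbd(E))=\langle y\rangle$), exactly as $\partial E_0$ does; the dual disk $E'$ of $E$, twisted suitably around this solid torus, then furnishes the desired primitive disk. I expect the main obstacle to be precisely this geometric identification of the primitive disk disjoint from $E_p$, together with the careful bookkeeping in the inductive word computation; once the claim $\partial E_j=\sigma(w_j)$ is in hand, the reduction to Lemma~\ref{lem:four_primitives} is routine.
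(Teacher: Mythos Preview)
Your proposal is correct and follows essentially the same route as the paper: the key identity $\partial E_j = \sigma(w_j)$ (the paper phrases it as ``the substitution of $z$ for $xy$'') reduces part~(2) immediately to the Four Primitives Lemma together with Lemma~\ref{lem:primitive_element}, exactly as you outline. For the semiprimitivity of $E_p$ the paper is more concrete than your ``twist $E'$ suitably'' sketch: it directly exhibits a circle $e''$ in $\Sigma_p$, disjoint from $x\cup y\cup e_p^{\pm}$ and meeting $\partial E$ once, and observes that $e''$ lies on the boundary of the $3$-ball $W$ cut along $E'\cup E'_0$, hence bounds the required primitive disk $E''\subset W$ disjoint from $E_p$.
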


\begin{proof}
(1) $E_0$ is a semiprimitive disk disjoint from $E'$ from the construction.
For the disk $E_p$, it is easy to find a circle $e''$ in $\Sigma$ such that $e'' \cap \Sigma_p$ is an arc which connects the two holes $e^+$ and $e^-$ and is disjoint from $x \cup y \cup e^+_p \cup e^-_p$ (see the arc $e''$ in the surface $\Sigma_5$ in Figure \ref{sequence}).
Cutting $W$ along $E' \cup E'_0$, we have a $3$-ball $B$, and the circle $e''$ lies in $\partial B$.
Thus $e''$ bounds a disk $E''$ in $W$ which is primitive since $e''$ intersects $\partial E$ in a single point.
The disk $E_p$ is disjoint from $E''$ and so is semiprimitive.

(2) From the construction, each circle $\partial E_j$ represents the element $w_j$ in the $(p, q)$-sequence in section \ref{sec:free_group}, by the substitution of $z$ for $xy$.
Thus the conclusion follows by Lemma \ref{lem:four_primitives} with Lemma \ref{lem:primitive_element}.
\end{proof}

The following is a kind of generalization of Lemma \ref{lem:first_surgery} into the case of the sequence of disks generated by a dual pair.

\begin{lemma}
Let $E_0, E_1, \cdots, E_{p-1}, E_p$ be a sequence of disks in $V$ generated by a dual pair $\{E, E'\}$ where $E$ is in $V$, and let $D$ be a primitive or semiprimitive disk in $V$.
For $j \in \{1, 2, \cdots, p-1\}$,
\begin{enumerate}
\item if $D$ is disjoint from $E \cup E_j$ and is isotopic to none of $E$ and $E_j$, then $D$ is isotopic to either $E_{j-1}$ or $E_{j+1}$, and
\item if $D$ intersects $E \cup E_j$, then one of the disks from surgery on $E \cup E_j$ along an outermost subdisk $C$ of $D$ cut off by $D \cap (E \cup E_j)$ is either $E$ or $E_j$, and the other one is either $E_{j-1}$ or $E_{j+1}$.
\end{enumerate}
\label{lem:surgery_on_primitive}
\end{lemma}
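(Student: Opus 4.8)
The plan is to exploit that $\{E, E_j\}$ is itself a complete meridian system of $V$ for every $j \in \{1, \dots, p-1\}$: by construction the four-holed sphere $\Sigma_j$ obtained by cutting $\partial V$ along $\partial E \cup \partial E_j$ has genus zero, so $E \cup E_j$ cuts $V$ into a $3$-ball. This lets me reuse two facts already in place, namely the description of non-separating disks disjoint from a complete meridian system by arcs in the cut-open surface, and the surgery dichotomy stating that, for a non-separating $D$, one of the two disks from surgery on $E \cup E_j$ along an outermost subdisk is isotopic to $E$ or $E_j$ and the other to neither. I would prove (1) first, as a classification of the primitive and semiprimitive disks lying in $\Sigma_j$, and then deduce (2) by pushing an outermost surgery into that classification.

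For (1), suppose $D$ is primitive or semiprimitive, disjoint from $E \cup E_j$, and isotopic to neither $E$ nor $E_j$. Then $\partial D$ is the frontier in $\Sigma_j$ of a regular neighborhood of $e^{\pm} \cup \delta \cup e_j^{\pm}$ for an arc $\delta$ joining one of the holes $e^{\pm}$ coming from $\partial E$ to one of the boundary circles $e_j^{\pm}$ coming from $\partial E_j$. Reading $\partial D$ against the generators $x = \partial E'$ and $y = \partial E_0'$ as in Section \ref{sec:sequence_of_disks}, the word determined by $D$ is governed by how $\delta$ meets the $p$ spanning arcs of $y$. The key dichotomy is whether $\delta$ is disjoint from $y$. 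If $\delta$ crosses $y$, then (after orienting so that the holes $e^{\pm}$ contribute $x^{\pm 1}$) the word of $\partial D$ is forced to contain one of the forbidden pairs of Lemma \ref{lem:key}, namely both $xy$ and $xy^{-1}$, or both $xy^{n}x$ and $y^{n+2}$; hence $\partial D$ cannot represent a power of a primitive element, contradicting that $D$ is primitive or semiprimitive (recall that a semiprimitive boundary is the $p$-th power of a primitive element). Therefore $\delta$ is disjoint from $y$, and so is confined to a single one of the $p$ rectangles into which $y$ cuts $\Sigma_j$. Up to isotopy only finitely many such arcs exist; discarding those that yield a disk isotopic to $E$ or $E_j$, I would match the two survivors with $E_{j-1}$ and $E_{j+1}$ by comparing directly with the recursive construction of the sequence, where $E_{j+1}$ is by definition the disk determined by an arc in $\Sigma_j$ that is disjoint from $y$ and not parallel to $x \cap \Sigma_j$, and $E_{j-1}$ is the remaining such arc on the opposite hole. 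No other $E_k$ can occur, since $|E_i \cap E_j| = j-i-1$ forces every member of the sequence other than $E_{j\pm 1}$ to meet $E_j$.

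For (2), let $C$ be an outermost subdisk of $D$ cut off by $D \cap (E \cup E_j)$, with $C \cap (E \cup E_j) = \alpha$ lying on $E$ or on $E_j$. The surgery dichotomy gives that one of the two resulting disks is isotopic to $E$ or $E_j$, while the other, say $E_1$, is isotopic to neither; since $C$ is outermost and $E$ and $E_j$ are disjoint, $E_1$ is disjoint from $E \cup E_j$ and is therefore the disk in $\Sigma_j$ determined by the arc $\beta = \partial C \cap \Sigma_j$. As in the proof of Lemma \ref{lem:common_dual}, if $\beta$ met $y$ then the word of the original curve $\partial D$ would again contain a forbidden pair of Lemma \ref{lem:key}, contradicting that $D$ is primitive or semiprimitive; hence $\beta$ is disjoint from $y$. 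Thus $E_1$ is one of the disjoint-from-$y$ disks classified in (1), and being isotopic to neither $E$ nor $E_j$ it must be $E_{j-1}$ or $E_{j+1}$.

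The main obstacle is the arc analysis inside $\Sigma_j$ carried out uniformly in $j$. Unlike the base case covered by Lemma \ref{lem:first_surgery}, where the two holes sit in a single rectangle, for general $j$ the holes $e^{\pm}$ are spread across the rectangles cut by $y$ in a position dictated by $w_j$, and one must verify, for every way an arc can cross the spanning arcs of $y$, that a forbidden pair of Lemma \ref{lem:key} genuinely appears. The bookkeeping that separates the subcases $\alpha \subset E$ and $\alpha \subset E_j$, together with the precise identification of the two surviving disjoint-from-$y$ arcs with the neighbours $E_{j-1}$ and $E_{j+1}$ rather than with some other disjoint disk, is the delicate part; I expect Figures \ref{sequence} and \ref{intersection} to carry much of this verification.
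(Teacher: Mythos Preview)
Your proposal is correct and follows essentially the same route as the paper: represent $\partial D$ (or the outermost arc $\partial C \cap \Sigma_j$) by an arc in the $2$-holed annulus $\Sigma_j$, argue via Lemma~\ref{lem:key} that this arc cannot cross $\partial E'_0$, and conclude that the only surviving disks are $E_{j-1}$ and $E_{j+1}$. The paper's proof is terser---it simply invokes ``the same argument'' as in Lemmas~\ref{lem:common_dual} and~\ref{lem:first_surgery} rather than spelling out the forbidden-pair analysis---so your concern about the uniform treatment in $j$ is more caution than the paper itself exercises; one cosmetic point is that naming the new surgery disk $E_1$ in part~(2) clashes with the sequence notation.
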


\begin{proof}
Suppose that $D$ is disjoint from $E \cup E_j$.
The boundary circle $\partial D$ lies in the $2$-holed annulus $\Sigma_j$.
Thus $\partial D$ can be considered as the frontier of the union of one hole and one boundary circle of $\Sigma_j$, and an arc $\alpha_j$ connecting them.
By the same argument for the proof of Lemmas \ref{lem:common_dual} and \ref{lem:first_surgery}, the arc $\alpha_j$ cannot intersect the arcs of $\partial E'_0 \cap \Sigma_j$ otherwise $D$ would not be (semi)primitive.
Thus the disk $D$ must be either $E_{j-1}$ or $E_{j+1}$.
(Note that if both of $E_{j-1}$ and $E_{j+1}$ are not primitive, then we can say that such a primitive disk $D$ does not exist.)
The second statement can be proved in the same manner by considering the arc $\partial C \cap \Sigma_j$  for the outermost subdisk $C$ of $D$.
\end{proof}

\section{Disks obtained from surgery on a primitive disk}
\label{sec:surgery_on_primitive}

In this section, we develop several properties of primitive disks, pairs and triples.
Here we will assume that $1 \leq q \leq p/2$ for a given lens space $L(p, q)$ with a genus two Heegaard splitting $(V, W; \Sigma)$.

\begin{theorem}
Given a lens space $L(p, q)$ with $1 \leq q \leq p/2$, suppose that $p \equiv \pm 1 \pmod{q}$.
Let $D$ and $E$ be primitive disks in $V$ which intersect each other transversely and minimally.
Then at least one of the two disks from surgery on $E$ along an outermost subdisk of $D$ cut off by $D \cap E$ is primitive.\par
\label{thm:primitive}
\end{theorem}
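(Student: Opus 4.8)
The plan is to reduce the problem to the dual construction of Section~\ref{sec:sequence_of_disks} and then split into an easy and a hard case according to how the chosen outermost subdisk meets a fixed semiprimitive disk. First I would fix a dual disk $E'$ of the primitive disk $E$, together with the unique semiprimitive disks $E_0 \subset V$ and $E'_0 \subset W$ disjoint from $E \cup E'$, and set up the symbols $x = \partial E'$ and $y = \partial E'_0$ so that $\partial E$ represents $x$, $\partial E_0$ represents $y^p$, and $\partial D$ represents a primitive element of $\pi_1(W) = \langle x, y \rangle$ by Lemma~\ref{lem:primitive_element}. Recall from Section~\ref{sec:sequence_of_disks} that $\{E, E_0\}$ is a complete meridian system of $V$, since $\Sigma_0 = \partial V$ cut along $\partial E \cup \partial E_0$ is a $4$-holed sphere. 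Let $\alpha$ be the outermost arc of $D \cap E$ cutting off the subdisk $C \subset D$ with $C \cap E = \alpha$, and let $E_1, E_2$ be the two disks obtained by surgery on $E$ along $C$; both are non-separating.

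The easy case is when $C$ is disjoint from $E_0$. Then $C \cap (E \cup E_0) = \alpha$ is a single arc that is outermost in $D$ among all arcs of $D \cap (E \cup E_0)$, so $C$ is an outermost subdisk of $D$ cut off by $D \cap (E \cup E_0)$ and is incident to $E$. Surgery on $E \cup E_0$ along $C$ then coincides with surgery on $E$ along $C$, so Lemma~\ref{lem:first_surgery} applies: one of $E_1, E_2$ is $E$ or $E_0$ and the other is primitive. Since surgery on $E$ along a genuine outermost subdisk produces two disks isotopic to neither $E$ nor $D$, the first disk must be $E_0$, which is semiprimitive, and hence the other is the desired primitive disk. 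Notably this case does not use the congruence hypothesis.

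The hard case, and the main obstacle, is when $C$ meets $E_0$. Here I would pass to the word framework. Let $\gamma = \partial C \cap \partial D$ be the free arc of the band $C$; then $\partial E_1$ and $\partial E_2$ are built from $\gamma$ together with the two arcs into which the endpoints of $\alpha$ divide $\partial E$, so that exactly one of them picks up the single $x$ coming from $\partial E$. Each $\partial E_i$ is cyclically reduced of the positive type described by Lemma~\ref{lem:primitive}, and its $y$-exponents are governed by the arcs of $C \cap E_0$. The goal is to show that the hypothesis $p \equiv \pm 1 \pmod{q}$—equivalently $q q' = p \mp 1$ for the integer $q'$ of Lemma~\ref{lem:four_primitives}—forces at least one of the words $\partial E_1, \partial E_2$ to be primitive, so that the corresponding $E_i$ is primitive by Lemma~\ref{lem:primitive_element}. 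I expect the efficient route to be the contrapositive: assuming that neither $E_1$ nor $E_2$ is primitive, one tracks the $y$-exponents carried by $C \cap E_0$ and uses the relation $q q' = p \mp 1$ to force $\partial D$ to contain one of the forbidden pairs of subwords of Lemma~\ref{lem:key}—either both $xy$ and $xy^{-1}$, or both $xy^{n}x$ and $y^{n+2}$—contradicting the primitivity of $D$.

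The delicate point, and where I expect the real work to lie, is the bookkeeping of this last step: relating the outermost surgery on $E$ precisely to a decomposition of the primitive word $\partial D$, controlling the $y$-exponents in terms of $q$ and $p$, and checking that $q q' = p \mp 1$ is exactly the arithmetic that prevents both surgery disks from being non-primitive. That the argument should break down when $p \not\equiv \pm 1 \pmod{q}$ is to be expected, since in that range the primitive disk complex is disconnected, so no such primitivity-preserving surgery can exist in general.
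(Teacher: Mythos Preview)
Your easy case (when $C\cap E_0=\emptyset$) is correct and matches the paper exactly. The hard case, however, has a genuine gap: you are missing the structural mechanism that makes the word analysis tractable.

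First, you fix the dual disk $E'$ arbitrarily at the outset. The paper does not: after the outermost subdisk $C$ is fixed, one chooses $E'$ among \emph{all} dual disks of $E$ so that the associated semiprimitive disk $E_0$ meets $C$ in the fewest arcs. This minimality is used twice, and without it the argument does not close up.

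Second, the paper does not attempt to read off the words of the two surgery disks directly from $\gamma=\partial C\cap\partial D$. Instead it shows that these two disks are \emph{consecutive terms} $E_j$, $E_{j+1}$ in a sequence of disks $E_0,E_1,\ldots,E_p$ generated by the dual pair $\{E,E'\}$ (Section~\ref{sec:sequence_of_disks}). One gets there by iterated surgeries: an outermost subdisk $C_0\subset C$ for $C\cap E_0$ produces $E_1$ via Lemma~\ref{lem:first_surgery}; then an outermost subdisk $C_1\subset C$ for $C\cap E_1$ produces $E_2$ via Lemma~\ref{lem:surgery_on_primitive}; and so on, with $|C\cap E_{k+1}|<|C\cap E_k|$ at each step. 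The minimality of $|C\cap E_0|$ forces this to terminate at some $E_j$ disjoint from $C$ (otherwise $|C\cap E_p|<|C\cap E_0|$, but $E_p$ is also semiprimitive disjoint from $E$), and moreover forces $1\le j<p/2$ (else $|C\cap E_0|=j>p-j-1=|C\cap E_p|$ again violates minimality). Only \emph{after} this reduction does one invoke Lemma~\ref{lem:key}: knowing the explicit word of $\partial E_{j+1}$ for each $j$, and that $\partial C\cap\Sigma_j$ reads off that word (up to the leading $x$), one checks that for $2\le j\le q'-2$ or $q'<j<p/2$ the word contains a forbidden pair $xy^qx$ and $y^{q+2}$, or $xyx$ and $y^n$ with $n>2$, contradicting the primitivity of $D$. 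Hence $j\in\{1,\,q'-1,\,q'\}$, so one of the two surgery disks is $E_1$ or $E_{q'}$, both primitive by Lemma~\ref{lem:sequence}.

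Your contrapositive idea (``if neither surgery disk is primitive, force a forbidden subword in $\partial D$'') is in the right spirit, but as stated it lacks any handle on \emph{which} words the surgery disks realize. The sequence $E_0,\ldots,E_p$ is precisely that handle, and the minimality choice of $E'$ is what pins the surgery disks down inside the first half of the sequence. Without these two ingredients your ``bookkeeping'' step is not just delicate---it has no starting point.
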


\begin{proof}
Let $C$ be an outermost subdisk of $D$ cut off by $D \cap E$.
The choice of a dual disk $E'$ of $E$ determines a unique semiprimitive disk $E_0$ in $V$ which is disjoint from $E \cup E'$.
Among all the dual disks of $E$, choose one, denoted by $E'$ again, so that the resulting semiprimitive disk $E_0$ intersects $C$ minimally.
If $C$ is disjoint from $E_0$, then, by Lemma \ref{lem:first_surgery}, the disk from surgery on $E$ along $C$ other than $E_0$ is primitive, having the common dual disk $E'$ with $E$, and so we are done.

From now on, we assume that $C$ intersects $E_0$.
Then one of the disks from surgery on $E_0$ along an outermost subdisk $C_0$ of $C$ cut off by $C \cap E_0$ is $E$, and the other one, say $E_1$, is primitive having the common dual disk $E'$ with $E$, by Lemma \ref{lem:first_surgery} again.
Then we have the sequence $E_0, E_1, E_2, \cdots, E_p$ of disks generated by the dual pair $\{E, E'\}$ starting with the disks $E_0$ and $E_1$.
Let $E'_0$ be the unique semiprimitive disk in $W$ disjoint from $E \cup E'$.
The circle $\partial E'_0$ would be a $(p, \bar q)$-curve on the boundary of the solid torus $\cl(V- \Nbd(E \cup E'))$ for some $\bar q \in \{q, q', p-q', p-q\}$, where $q'$ satisfies $0 < q' < p$ and $qq' \equiv \pm 1 \pmod{p}$.
We will consider only the case of $\bar q = q$, and so $\partial E'_0$ is a $(p, q)$-curve again.
The proof is easily adapted for the other cases.

If $C$ intersects $E_1$, then one of the disks from surgery on $E_1$ along an outermost subdisk $C_1$ of $C$ cut off by $C \cap E_1$ is $E$, and the other one is either $E_0$ or $E_2$ by Lemma \ref{lem:surgery_on_primitive}, but it is actually $E_2$ since we have $|C \cap E_1| < |C \cap E_0|$ from the surgery construction.
In general, if $C$ intersects each of $E_1, E_2, \cdots, E_j$, for $j \in \{1, 2, \cdots, p-1\}$, the disk from surgery on $E_j$ by an outermost subdisk $C_j$ of $C$ cut off by $C \cap E_j$, other than $E$, is $E_{j+1}$, and we have $|C \cap E_{j+1}| < |C \cap E_j|$.
Consequently, we see that $|C \cap E_p| < |C \cap E_0|$, but it contradicts the minimality of $|C \cap E_0|$ since $E_p$ is also a semiprimitive disk disjoint from $E$.
Thus, there is a disk $E_j$ for some $j \in \{1, 2, \cdots, p-1\}$ which is disjoint from $C$.

Now, denote by $E_j$ again the first disk in the sequence that is disjoint from $C$.
Then the two disks from surgery on $E$ along $C$ are $E_j$ and $E_{j+1}$, hence $C$ is also disjoint from $E_{j+1}$.
Actually they are the only disks in the sequence disjoint from $C$.
For other disks in the sequence, it is easy to see that $|C \cap E_{j-k}| = k = |C \cap E_{j+1+k}|$ (by a similar observation to the fact that $|E_i \cap E_j| = j - i -1$ for $0 \leq i < j \leq p$ in the sequence of disks).
If $j \geq p/2$, then we have $|C \cap E_0| = j > p-j-1 = |C \cap E_p|$, a contradiction for the minimality condition again.
Thus, $E_j$ is one of the disks in the first half of the sequence, that is, $1 \leq j < p/2$.

\smallskip

{\noindent \sc Claim.}
The disk $E_j$ is one of $E_1$, $E_{q'-1}$ or $E_{q'}$, where $q'$ is the unique integer satisfying $1 \leq q' \leq p/2$ and $qq' \equiv \pm 1 \pmod{p}$.

\smallskip

{\noindent \it Proof of Claim.}
We have assumed that $p \equiv \pm 1 \pmod{q}$ with $1 \leq q \leq p/2$, and so $q' = 1$ if $q = 1$, and $p= qq' + 1$ if $q=2$, and $p = qq' \pm 1$ if $q \geq 3$.
Assigning symbols $x$ and $y$ to oriented $\partial E'$ and $\partial E_0'$ respectively, $\partial E_{q'}$ may represent the primitive element of the form $xy^qxy^q \cdots xy^qxy^{q \pm 1}$ if $q \geq 2$ or $xy^p$ if $q = 1$.
In general, $\partial E_k$ represents an element of the form $xy^{n_1}xy^{n_2} \cdots xy^{n_k}$ for some positive integers $n_1, \cdots, n_k$ with $n_1+ \cdots + n_k = p$ for each $k \in \{1, 2, \cdots, p\}$.
Furthermore, since $C$ is disjoint from $E_j$ and $E_{j+1}$, the word determined by the arc $\partial C \cap \Sigma_j$ is of the form $y^{m_1}xy^{m_2} \cdots xy^{m_{j+1}}$ (or its reverse) when $\partial E_{j+1}$ represents an element of the form $xy^{m_1}xy^{m_2} \cdots xy^{m_{j+1}}$.

If $2 \leq j \leq q'-2$, then an element represented by $\partial E_{j+1}$ has the form $xy^qxy^q \cdots xy^q xy^{p-jq}$, and so an element represented by $\partial D$ contains $xy^qx$ and $y^{p-jq}$, which lies in the part $\partial C \cap \Sigma_j$ of $\partial D$.
We have $q' \geq 4$ in this case, and so $q \geq 2$.
Thus $p-jq = qq' \pm 1 - jq \geq q+2$.
By Lemma \ref{lem:key}, the disk $D$ cannot be primitive, a contradiction.

Suppose that $q' < j <p/2$.
First, observe that $\partial E_{q'+1}$ represents an element of the form $xy^q \cdots xy^qxy$ if $p = qq'+1$ or $xyxy^{q-1}xy^q \cdots xy^qxy^{q-1}$ if $p=qq' -1$.
Also a word represented by $\partial E_{j+1}$ is obtained by changing one $xy^q$ of a word represented by $\partial E_j$ into $xy^{q-1}xy$ or $xyxy^{q-1}$.
Thus, when we write $xy^{n_1}xy^{n_2} \cdots xy^{n_{j+1}}$ a word represented by $\partial E_{j+1}$, at least one of $n_2, n_3, \cdots, n_j$ must be $1$, and one of $n_1, n_2, \cdots, n_{j+1}$ is greater than $2$.
Since $C$ is disjoint from $E_j$ and $E_{j+1}$, the word corresponding to $\partial C \cap \Sigma_j$ is of the form $y^{n_1}xy^{n_2} \cdots xy^{n_{j+1}}$, which contains both of $xyx$ and $y^n$ for some $n > 2$.
Consequently, by Lemma \ref{lem:key}, the disk $D$ cannot be primitive, a contradiction again.

\smallskip

From the claim, at least one of the disks from surgery on $E$ along $C$ is either $E_1$ or $E_{q'}$.
The disk $E_1$ is primitive, and since we assumed that the circle $\partial E'_0$ is a $(p, q)$-curve on the boundary of the solid torus $\cl(V - \Nbd(E \cup E'))$, the disk $E_{q'}$ is also primitive by Lemma \ref{lem:sequence}, which completes the proof.
\end{proof}

In the proof of the above theorem, we assumed $\bar q = q$, which implied that a resulting disk from surgery is $E_1$ or $E_{q'}$.
The same result holds when $\bar q = p-q$.
But if we assume $\bar q = \{q', p-q'\}$, then the resulting disk will be $E_1$ or $E_q$ which turn out to be primitive in the corresponding sequence of disks.
Together with this observation, assuming that $D$ is disjoint from $E$, and so taking the disk $D$ instead of an outermost subdisk $C$, we have the following result.

\begin{lemma}
Let $\{D, E\}$ be a primitive pair of $V$.
Then there is a sequence of disks $E_0, E_1, \cdots, E_p$ generated by a dual pair $\{E, E'\}$ for some dual disk $E'$ of $E$, such that $D$ is one of the disks $E_1$, $E_q$ or $E_{q'}$ in the sequence, where $q'$ is the unique integer satisfying $qq' \equiv \pm 1 \pmod{p}$ with $1 \leq q' \leq p/2$.
\label{lem:primitive_pair}
\end{lemma}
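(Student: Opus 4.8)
The plan is to reuse the machinery of Theorem~\ref{thm:primitive} almost verbatim, with the crucial simplification that $D$ is disjoint from $E$ (being its partner in a primitive pair), so that $D$ itself plays the role formerly played by the outermost subdisk $C$. First I fix a dual disk $E'$ of $E$, let $E_0$ be the semiprimitive disk in $V$ disjoint from $E\cup E'$, and, ranging over all dual disks of $E$, choose $E'$ so that $|D\cap E_0|$ is minimal. Writing $\partial E_0'$ as a $(p,\bar q)$-curve on $\partial\,\cl(V-\Nbd(E\cup E'))$ with $\bar q\in\{q,q',p-q',p-q\}$, I form the sequence $E_0,E_1,\dots,E_p$ generated by $\{E,E'\}$; by Lemma~\ref{lem:sequence} its primitive members sit exactly at the indices $1,\bar q',p-\bar q',p-1$, where $\bar q\bar q'\equiv\pm1\pmod p$.

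If $|D\cap E_0|=0$, then $\partial D$ lies in the $4$-holed sphere $\Sigma_0$ and is the frontier of a hole, a boundary circle, and a connecting arc; that arc must miss $y=\partial E_0'$, for otherwise $\partial D$ would carry both $xy$ and $xy^{-1}$ and $D$ could not be primitive by Lemma~\ref{lem:key}. Hence $D$ is precisely a disk of the kind that defines $E_1$, and I take $D=E_1$. If instead $|D\cap E_0|>0$, I would rerun the cascade of Theorem~\ref{thm:primitive} with $D$ in place of $C$: surgeries along outermost subdisks of $D$, now cut off by $D\cap E_j$ and incident to $E_j$ since $D\cap E=\varnothing$, reproduce the sequence by Lemma~\ref{lem:surgery_on_primitive}, with $|D\cap E_{j+1}|<|D\cap E_j|$ at each step. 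Were $D$ to meet every one of $E_0,\dots,E_{p-1}$, this would force $|D\cap E_p|<|D\cap E_0|$; but $E_p$ is semiprimitive and, by the construction in the proof of Lemma~\ref{lem:sequence}(1), is the semiprimitive disk disjoint from $E\cup E''$ for a suitable dual disk $E''$ of $E$, so $E_p$ competes in the minimization and this inequality contradicts the choice of $E'$. Therefore $D$ is disjoint from some $E_j$ with $1\le j\le p-1$.

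Given such an $E_j$, Lemma~\ref{lem:surgery_on_primitive}(1) yields $D\in\{E_{j-1},E_j,E_{j+1}\}$, so $D=E_k$ for some $k$; since $D$ is primitive, Lemma~\ref{lem:sequence} confines $k$ to $\{1,\bar q',p-\bar q',p-1\}$. To finish I invoke minimality once more: as $D=E_k$ we have $|D\cap E_0|=k-1$ and $|D\cap E_p|=p-k-1$, and the competitor $E_p$ forces $k-1\le p-k-1$, i.e. $k\le p/2$. Because $\bar q'\le p/2$ while $p-\bar q'\ge p/2$ and $p-1\ge p/2$, the only surviving indices are $k=1$ and $k=\bar q'$. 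Finally $\bar q\in\{q,p-q\}$ gives $\bar q'=q'$ and $\bar q\in\{q',p-q'\}$ gives $\bar q'=q$, so in every case $D$ is one of $E_1,E_q,E_{q'}$, which is the assertion.

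The main obstacle is the second paragraph's bookkeeping. I must check that the surgery cascade genuinely rebuilds $E_0,E_1,\dots,E_p$ when the outermost subdisks abut the $E_j$ rather than $E$, and, most importantly, that $E_p$ is a bona fide competitor in the minimization, since both the contradiction that locates $E_j$ and the final bound $k\le p/2$ rest on comparing $p-k-1$ against $k-1$. Pinning down that $E_p$ is semiprimitive and is disjoint from $E$ together with an explicit dual disk $E''$ (the disk produced in the proof of Lemma~\ref{lem:sequence}(1)) is what legitimizes both uses; the remaining identification of $\bar q'$ with $q$ or $q'$ is exactly the observation recorded just before the statement.
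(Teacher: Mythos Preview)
Your proof is correct and is precisely the expansion the paper intends with its one-line remark ``taking the disk $D$ instead of an outermost subdisk $C$'': you rerun the cascade of Theorem~\ref{thm:primitive}, use the minimality of $|D\cap E_0|$ against the competitor $E_p$ to force $k\le p/2$, and identify $\bar q'$ with $q$ or $q'$. The one point worth making explicit is that the word-analysis Claim of Theorem~\ref{thm:primitive} (which required $p\equiv\pm1\pmod q$) is unnecessary here, since once $D$ is disjoint from $E\cup E_j$ you get $D=E_k$ outright from Lemma~\ref{lem:surgery_on_primitive}(1), and then primitivity of $D$ itself pins down $k$ via Lemma~\ref{lem:sequence}---exactly as you do.
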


We say simply that a primitive pair has a common dual disk if the two disks of the pair have a common dual disk.

\begin{theorem}
Given a lens space $L(p, q)$ with $1 \leq q \leq p/2$, each primitive pair has a common dual disk if and only if $q = 1$.
In this case, it has a unique common dual disk if $p \geq 3$, and has  exactly two disjoint common dual disks if $p = 2$.
\label{thm:common_dual}
\end{theorem}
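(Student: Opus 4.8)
The plan is to prove the equivalence and the counting together by translating ``common dual disk'' into ``semiprimitive disk disjoint from the pair'' via Lemma \ref{lem:common_dual}, and then to count the latter using the sequence machinery. Note first that $q=1$ forces $q'=1$. I would also record one elementary observation used throughout: in any sequence $E_0,\dots,E_p$ generated by a dual pair, $E_j$ is semiprimitive only when $j\in\{0,p\}$. Indeed, abelianizing $\pi_1(W)=\langle x,y\rangle$ and using that $\partial E_j$ represents $w_j$ (with the substitution $z=xy$), the class of $\partial E_j$ in $H_1(W)=\mathbb Z^2$ is $(j,p)$; since the boundary of a semiprimitive disk is a $p$-th power of a primitive element (the remark after Lemma \ref{lem:primitive_element}), its class is divisible by $p$, forcing $p\mid j$, i.e.\ $j\in\{0,p\}$.

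For the ``if'' direction, suppose $q=1$. Given any primitive pair $\{D,E\}$, Lemma \ref{lem:primitive_pair} provides a sequence $E_0,\dots,E_p$ generated by a dual pair $\{E,E'\}$ with $D\in\{E_1,E_q,E_{q'}\}=\{E_1\}$, so $D=E_1$. Since $E_0$ is semiprimitive (Lemma \ref{lem:sequence}) and disjoint from both $E$ and $E_1=D$, Lemma \ref{lem:common_dual} gives a common dual disk. For the contrapositive of ``only if'', I would assume $q\ge2$ (so $p\ge 2q\ge4$ and, by the choice of $q'$, also $q'\ge2$). Fixing a primitive $E$ with dual $E'$ and forming the sequence $E_0,\dots,E_p$, I would examine the pair $\{E,E_{q'}\}$: both disks are primitive (Lemma \ref{lem:sequence}), disjoint, and non-isotopic, so it is a primitive pair. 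If it had a common dual disk, Lemma \ref{lem:common_dual} would furnish a semiprimitive disk disjoint from $E\cup E_{q'}$; being semiprimitive it is isotopic to neither of the primitive disks $E,E_{q'}$, so Lemma \ref{lem:surgery_on_primitive}(1) makes it isotopic to $E_{q'-1}$ or $E_{q'+1}$. But $2\le q'\le p/2$ gives $q'\pm1\notin\{0,p\}$, so by the observation above neither is semiprimitive, a contradiction. Hence $\{E,E_{q'}\}$ has no common dual disk, which finishes the equivalence.

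It then remains to count common dual disks when $q=1$, where every primitive pair is $\{E,E_1\}$ as above. I would set up a bijection between the common dual disks of $\{E,E_1\}$ and the semiprimitive disks in $V$ disjoint from $E\cup E_1$. Given a common dual disk $E''$, let $S(E'')$ be the unique semiprimitive disk in $V$ disjoint from $E\cup E''$ (unique as in Section \ref{sec:sequence_of_disks}); since $E_1$ is primitive with dual $E''$, the Claim inside the proof of Lemma \ref{lem:common_dual} shows $E_1$ is disjoint from $S(E'')$, so $S(E'')$ is disjoint from $E\cup E_1$. Conversely, given a semiprimitive disk $S$ disjoint from $E\cup E_1$, the sufficiency argument of Lemma \ref{lem:common_dual} produces a common dual disk disjoint from $S$. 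These assignments are mutually inverse once one knows that each $S$ admits a \emph{unique} common dual disk disjoint from it. Finally the semiprimitive disks disjoint from $E\cup E_1$ are counted exactly as before: by Lemma \ref{lem:surgery_on_primitive}(1) with $j=1$, any such disk is isotopic to $E_0$ or $E_2$, and by the observation $E_2$ is semiprimitive if and only if $p=2$. Thus there is one such disk ($E_0$) when $p\ge3$ and two ($E_0$ and $E_2=E_p$) when $p=2$, yielding the asserted counts.

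The hard part is the uniqueness statement just invoked: for a fixed semiprimitive disk $S=E_0$ disjoint from $E\cup E_1$, there is a unique common dual disk disjoint from $S$. I would argue in the $4$-holed sphere $\Sigma_0=\cl(\partial V\setminus(\partial E\cup\partial E_0))$, viewed as a $2$-holed annulus with holes $e^\pm$ coming from $\partial E$ and boundary circles $e_0^\pm$ coming from $\partial E_0$. A common dual disk $E''$ disjoint from $E_0$ meets $\partial E$ once and misses $\partial E_0$, so $\partial E''\cap\Sigma_0$ is a single arc $\beta$ joining $e^+$ to $e^-$; primitivity of $E''$ together with Lemma \ref{lem:key} forces $\beta$ to avoid the $p$ spanning arcs of $\partial E_0'\cap\Sigma_0$, confining it to the single rectangle containing $e^\pm$, after which the requirement that $E''$ be dual to $E_1$ (meet $\partial E_1$ once) pins $\beta$ down up to isotopy. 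For $p=2$ I would additionally verify in this model that the two common dual disks coming from $E_0$ and from $E_2$ can be realized disjointly, which gives the last assertion. I expect this confinement-and-pinning step to be the main obstacle; the remaining ingredients follow directly from the cited lemmas.
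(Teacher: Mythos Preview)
Your argument for the biconditional is correct and matches the paper's almost verbatim: for $q=1$ you place $D$ as $E_1$ via Lemma~\ref{lem:primitive_pair} and invoke Lemma~\ref{lem:common_dual}; for $q\ge 2$ you exhibit the pair $\{E,E_{q'}\}$ and use Lemma~\ref{lem:surgery_on_primitive}(1) to rule out an adjacent semiprimitive disk. Your homological observation that $E_j$ is semiprimitive only for $j\in\{0,p\}$ is a clean way to finish that step.

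The counting argument, however, has a genuine gap. In your uniqueness step you assert that ``primitivity of $E''$ together with Lemma~\ref{lem:key} forces $\beta$ to avoid the $p$ spanning arcs of $\partial E_0'\cap\Sigma_0$.'' But Lemma~\ref{lem:key} is a statement about words in $\pi_1(W)$ read off from a meridian system of $W$, and $E''$ is a \emph{disk in $W$}: its boundary is null--homotopic in $W$, so Lemma~\ref{lem:key} yields no constraint. If instead you try the symmetric version with the meridian system $\{E,E_0\}$ of $V$, the word of $\partial E''$ is a single letter (it meets $\partial E$ once and misses $\partial E_0$), so again nothing is gained. Thus the confinement of $\beta$ to the rectangle is unjustified, and with it the whole bijection collapses: without that uniqueness you cannot conclude that distinct common dual disks give distinct semiprimitive disks. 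Even granting confinement, the ``pinning by duality with $E_1$'' is only asserted, and you have not checked that the pinned curve actually bounds in $W$.

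The paper avoids all of this by working on the $W$--side rather than the $V$--side. Fixing one common dual disk $E'$ and the associated semiprimitive disk $E_0'$ in $W$, it cuts $\partial W$ along $\partial E'\cup\partial E_0'$ to get a $4$--holed sphere $\Sigma'$, draws $\partial E$ and $\partial D$ explicitly there, and observes from that picture that any further common dual disk must have boundary missing $\partial E'\cup\partial E_0'$ (else it would meet $\partial D$ or $\partial E$ more than once). The possible boundary circles of such a disk are then read off directly from $\Sigma'$: none for $p\ge 3$, exactly one for $p=2$. The point is that constraints on a disk \emph{in $W$} are most naturally expressed after cutting along disks \emph{of $W$}, not of $V$; this is exactly what your $\Sigma_0$--approach misses.
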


\begin{proof}
Suppose that $q = 1$, and let $\{D, E\}$ be any primitive pair of $V$.
By Lemma \ref{lem:primitive_pair}, there is a sequence of disks $E_0, E_1, \cdots, E_p$ generated by a dual pair $\{E, E'\}$, in which $D$ is $E_1$ (here we have $q' = q = 1$).
By Lemma \ref{lem:common_dual}, $D$ and $E$ have a common dual disk.

\begin{figure}
\centering
\includegraphics[width=11.5cm,clip]{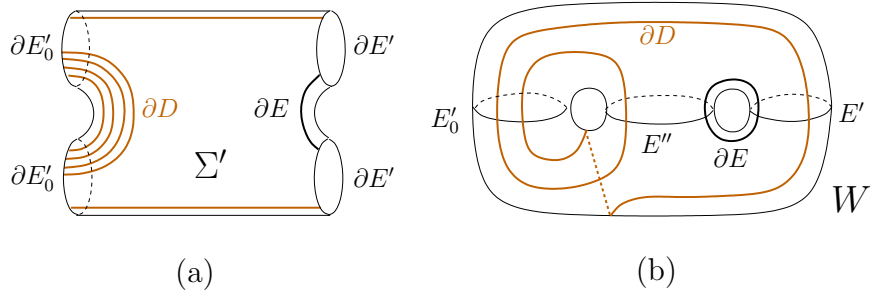}
\caption{(a) $\partial E$ and $\partial D$ lying in the $4$-holed sphere $\Sigma'$ (when $p = 5$ for example).
(b) Two common dual disks $E'$ and $E''$ of $D$ and $E$ for $L(2, 1)$.}
\label{common_dual}
\end{figure}

Now, let $E'$ be a common dual disk of $D$ and $E$.
Let $E'_0$ be the unique semiprimitive disk in $W$ disjoint from $E \cup E'$.
We recall that $E_0'$ is the meridian disk of the solid torus $\cl(W - \Nbd(E'))$.
Then $\partial E'_0$ intersects $\partial D$ in $p$ points.
Cut the surface $\partial W$ along the boundary circles $\partial E'$ and $\partial E'_0$ to obtain the $4$-holed sphere $\Sigma'$.
In $\Sigma'$, the boundary circle $\partial E$ is a single arc connecting two boundary circles of $\Sigma'$ that came from $\partial E'$.
But the boundary circle $\partial D$ in $\Sigma'$ consists of $(p-1)$ arcs connecting two boundary circles that came from $\partial E'_0$ together with two arcs connecting $\partial E'$ and $\partial E'_0$ as in Figure \ref{common_dual} (a).
Observe that if there is a common dual disk of $D$ and $E$ other than $E'$, then it cannot intersect $E' \cup E'_0$ otherwise it intersects $\partial D$ or $\partial E$ in more than one points.
Thus the boundary of any common dual disk $E''$ of $D$ and $E$ other than $E'$ is a circle inside $\Sigma'$, and hence, from the figure, it is obvious that one more common dual disk $E''$ other than $E'$ exists if and only if $p = 2$, and such an $E''$ is unique in this case.
See Figure \ref{common_dual} (b).

Conversely, suppose that every primitive pair has a common dual disk. Choose any sequence of disks $E_0, E_1, \cdots, E_p$ in $V$ generated by some dual pair $\{E, E'\}$.
Then one of the disks $E_{q'}$ and $E_q$ in the sequence is primitive, where $q'$ satisfies $1 \leq q' \leq p/2$ and $qq' = \pm 1 \pmod{p}$, which forms a primitive pair with $E$.
If $\{E, E_{q'}\}$ is a primitive pair, then it has a common dual disk, and so, by Lemma \ref{lem:common_dual}, there is a semiprimitive disk in $V$ disjoint from $E$ and $E_{q'}$.
The only possible semiprimitive disk disjoint from $E$ and $E_{q'}$ is $E_{q'-1}$ or $E_{q'+1}$ by Lemma \ref{lem:surgery_on_primitive}, that is, $E_{q' - 1}$ is $E_0$ or $E_{q'+1} = E_p$.
In any cases, we have $q = 1$ (the latter case implies $(p, q) = (2, 1)$).
The same conclusion holds in the case where $\{E, E_q\}$ is a primitive pair.
\end{proof}

It is clear that any primitive disk is a member of infinitely many primitive pairs.
But a primitive pair can be contained at most two primitive triples, which is shown as follows.

\begin{theorem}
Given a lens space $L(p, q)$, for $1 \leq q \leq p/2$, with a genus two Heegaard splitting $(V, W; \Sigma)$ of $L(p, q)$, there is a primitive triple in $V$ if and only if $q = 2$ or $p= 2q +1$.
In this case, we have the following refinements.
\begin{enumerate}
\item If $p = 3$, then each primitive pair is contained in a unique primitive triple.
    Further, each of the three primitive pairs in any primitive triple have unique common dual disks, which form a primitive triple of $W$.
\item If $p = 5$, then each primitive pair having a common dual disk is contained in a unique primitive triple, and each having no common dual disk is contained in exactly two primitive triples.
\item If $p \geq 7$, then each primitive pair having a common dual disk is contained either in a unique or in no primitive triple, and each having no common dual disk is contained in a unique primitive triple.
\item If $p \geq 5$, then exactly one of the three primitive pairs in any primitive triple has a common dual disk.
\end{enumerate}
\label{thm:triple}
\end{theorem}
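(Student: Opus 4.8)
The plan is to turn the problem into a count on the indices of a sequence of disks generated by a dual pair, and then to resolve the delicate statements about common dual disks by identifying, for each primitive pair in a triple, a single ``opposite'' disk.

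\emph{Reduction and main equivalence.} Given any primitive triple $\{D_1,D_2,D_3\}$, put $E=D_1$ and apply Lemma \ref{lem:primitive_pair} to $\{D_1,D_2\}$ to get a sequence $E_0,E_1,\dots,E_p$ generated by a dual pair $\{E,E'\}$ in which $D_2=E_j$; since $D_2$ is primitive, $j\in\{1,q',p-q',p-1\}$ by Lemma \ref{lem:sequence}. As $D_3$ is disjoint from $E\cup E_j$ and isotopic to neither, Lemma \ref{lem:surgery_on_primitive}(1) forces $D_3=E_{j-1}$ or $E_{j+1}$. Hence, after relabelling, every primitive triple has the form $\{E,E_k,E_{k+1}\}$ with $E_k,E_{k+1}$ two \emph{consecutive} primitive disks of the sequence, and conversely any such configuration is a primitive triple (the three disks are pairwise disjoint because $|E_k\cap E_{k+1}|=0$ and $E$ is disjoint from every $E_i$, and pairwise non-isotopic by comparing the elements they represent). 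So a triple exists iff two of the four primitive indices $1,q',p-q',p-1$ differ by $1$. A short analysis of $qq'\equiv\pm1\pmod p$ with $1\le q'\le p/2$ shows this happens exactly when $q'=2$, when $q'=(p-1)/2$, or when $p=3$, which together amount to $p=2q+1$ or $q=2$.

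\emph{Two auxiliary facts.} Presenting a primitive pair as $\{E,E_j\}$, the disks completing it to a triple are precisely the \emph{primitive} disks among its two neighbours $E_{j-1},E_{j+1}$ (Lemma \ref{lem:surgery_on_primitive}(1) again), so a pair lies in at most two triples; and by Lemma \ref{lem:common_dual} combined with Lemma \ref{lem:surgery_on_primitive}(1), $\{E,E_j\}$ has a common dual disk iff one of $E_{j-1},E_{j+1}$ is \emph{semiprimitive}, i.e. iff $j\in\{1,p-1\}$. Substituting the primitive indices of the two families ($q'=2$ and $q=2$) and reading off which neighbours are primitive or semiprimitive determines, for each pair of the form $\{E,E_k\}$ or $\{E,E_{k+1}\}$ occurring in a triple, both its common-dual status and its triple count; this already gives the parts of (2), (3), (4) concerning these ``outer'' pairs.

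\emph{The obstacle.} The delicate case is the ``middle'' pair $\{E_k,E_{k+1}\}$, whose third vertex is $E$ itself. Applying Lemma \ref{lem:surgery_on_primitive}(1) inside the sequence generated by $E_k$, the only disks disjoint from both $E_k$ and $E_{k+1}$ and isotopic to neither are $E$ and one further disk $G$; thus $\{E_k,E_{k+1}\}$ has a common dual iff $G$ is semiprimitive, and lies in a second triple iff $G$ is primitive. I expect $G$ to be semiprimitive when $q=2$, and to be neither primitive nor semiprimitive when $p=2q+1$ with $q\ge3$. To prove this I would change to the basis of $\pi_1(W)$ adapted to $E_k$ (using a dual disk of $E_k$), rewrite $\partial E_{k+1}$ in that basis, and then either exhibit a spanning arc in the $2$-holed annulus cut off by $\partial E_k\cup\partial E_{k+1}$ whose boundary represents a $p$-th power of a primitive element (producing a semiprimitive $G$ when $q=2$), or show via the pattern criterion of Lemma \ref{lem:key} that no disk disjoint from both can represent a power of a primitive (ruling out a semiprimitive $G$ when $p=2q+1$). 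This change-of-basis computation is where the real work lies.

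\emph{Assembly and small cases.} Granting the middle-pair analysis, statement (4) follows for $p\ge5$ because exactly one of the three ``opposite'' disks $E_{k-1}$, $E_{k+2}$, $G$ is then semiprimitive, and the per-pair counts assembled above yield (2) and (3) in full. Finally $p=3$ and $p=5$ I would treat directly: for $p=3$ the primitive indices collapse to $\{1,2\}$, every pair has a unique common dual by Theorem \ref{thm:common_dual} (here $q=1$), and one checks that the three common duals are pairwise disjoint primitive disks of $W$, giving (1); for $p=5$ the indices fill all of $\{1,2,3,4\}$, so the neighbour count produces exactly the pairs lying in two triples demanded by (2).
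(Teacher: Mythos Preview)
Your reduction to consecutive primitive indices in a sequence of disks, and your index criterion for common duals (via Lemma~\ref{lem:common_dual} and Lemma~\ref{lem:surgery_on_primitive}), match the paper's argument closely; the equivalence ``triple exists $\Leftrightarrow q=2$ or $p=2q+1$'' and the outer-pair counts for (2) and (3) go through just as you outline.

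The gap is exactly where you flag it: the middle pair $\{E_k,E_{k+1}\}$. Your proposed change-of-basis computation is not carried out, and the paper's resolution is both simpler and different in character. For the form $\{E,E_1,E_2\}$ (your case $q'=2$, i.e.\ $p=2q+1$), no computation is needed: since $E'$ is a \emph{common} dual of $E$ and $E_1$, one may run the sequence construction from the pair $\{E_1,E'\}$ instead. The semiprimitive disk disjoint from $E_1\cup E'$ is still $E_0$, so in the new sequence $D_0=E_0$, $D_1=E$, and (by Lemma~\ref{lem:surgery_on_primitive}(1)) $D_2=E_2$. Thus $\{E_1,E_2\}=\{E_1,D_2\}$ sits at index~$2$ in a sequence based at $E_1$, and your own index criterion immediately gives that it has no common dual; your disk $G$ is $D_3$. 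For the other form $\{E,E_q,E_{q+1}\}$ (equivalently your case $q=2$, middle pair $\{E_{q'},E_{p-q'}\}$), the paper does not argue abstractly but \emph{exhibits} the common dual: one draws an explicit simple closed curve in the $4$-holed sphere $\Sigma_q$ meeting each of $\partial E_q$, $\partial E_{q+1}$ once, and checks that it bounds a disk in $W$ by observing it lies on the boundary of a $3$-ball obtained by cutting $W$ along two already-known disks.

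So your expectations about $G$ are correct, but the route you propose (rewrite $\partial E_{k+1}$ in a new basis and invoke Lemma~\ref{lem:key}) is heavier than necessary for one case and does not obviously produce the required dual disk in the other. Replacing it with the ``exchange roles via the shared dual disk'' trick and a direct construction closes the argument.
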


\begin{proof}
Note that $L(2q + 1, q)$ is homeomorphic to $L(2q+1, 2)$.
We prove first the necessity together with the refinements.
Suppose that $q=2$ or $p=2q+1$, and let $\{D, E\}$ be any primitive pair of $V$.
By Lemma \ref{lem:primitive_pair}, there is a sequence of disks $E_0, E_1, \cdots, E_p$ generated by a dual pair $\{E, E'\}$, in which $D$ is one of $E_1$, $E_2$ or $E_q$.

If $p=3$, the disk $D$ is $E_1$, and so $E_2$ is the unique primitive disk disjoint from $E \cup E_1$ by Lemma \ref{lem:surgery_on_primitive}.
Thus $\{D, E\}$ is contained in the unique primitive triple $\{D, E, E_2\}$.
The primitive pairs $\{E, D\} = \{E, E_1\}$ and $\{E, E_2\}$ in the triple have unique common dual disks by Lemma \ref{lem:common_dual} and Theorem \ref{thm:common_dual}.
Figure \ref{triple} (a) illustrates the boundary circles of the common dual disks $E'$ and $E''$ of $\{E, E_1\}$ and $\{E, E_2\}$ respectively.
In the figure, $\partial E'$ is the union of two arcs connecting the points $a$ and $b$, and $\partial E''$ is the union of three arcs connecting $c$, $d$ and $e$.
The circle $\partial E''$ really bounds a disk in $W$ since it lies in the boundary of the $3$-ball $W$ cut off by $E' \cup E'_0$, where $\partial E'_0$ is the union of the dotted arcs in the figure.
Observe that $E'$ and $E''$ are disjoint from and are not isotopic to each other.

Furthermore, exchanging the roles of $D$ and $E$, we have the sequence of disks $D_0, D_1, D_2, D_3$ generated by $\{D, E'\}$ where $D_0 = E_0$, $D_1 = E$ and $D_2 = E_2$.
By the same argument to the above, there exists the unique common dual disk $E'''$ of $\{D, D_2\} = \{D, E_2\}$ which is disjoint from and is not isotopic to the common dual disk $E'$ of $\{D, D_1\} = \{D, E\}$.
Considering one more sequence of disks, that is, generated by $\{E_2, E''\}$, whose first two disks are $E_3$ and $E$, we see that $E'''$ is disjoint from and is not isotopic to $E''$, and hence $\{E', E'', E'''\}$ is a primitive triple in $W$.

\begin{figure}
\centering
\includegraphics[width=12.5cm,clip]{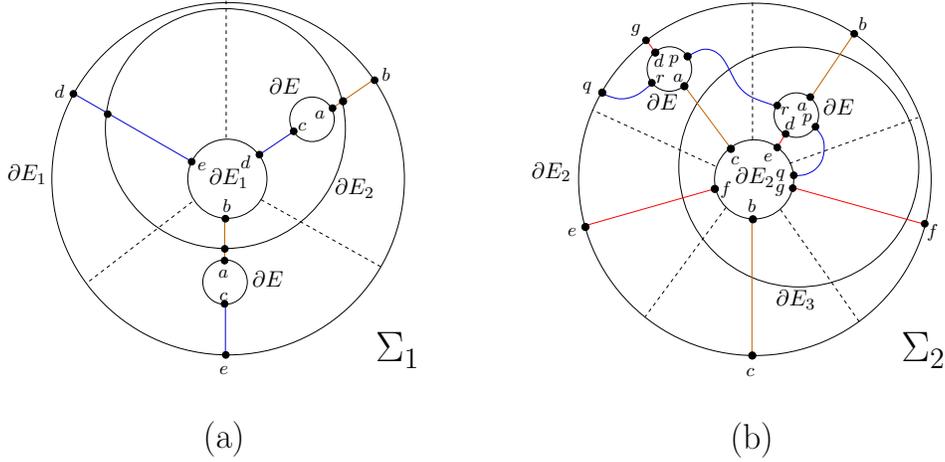}
\caption{(a) The $4$-holed sphere $\Sigma_1$ in $L(3, 1)$.
(b) The $4$-holed sphere $\Sigma_2$ in $L(5, 2)$.}
\label{triple}
\end{figure}

If $p=5$, then the disk $D$ is either $E_1$ or $E_2$.
If $\{D, E\}$ has a common dual disk, then $D$ is $E_1$, and they are contained in the unique primitive triple $\{D, E, E_2\}$.
If $\{D, E\}$ has no common dual disk, then $D$ is $E_2$, and they are contained in exactly two primitive triples $\{D, E, E_1\}$ and $\{D, E, E_3\}$.

If $p \geq 7$, then $D$ is either $E_1$, $E_2$ or $E_q$.
Observe that if one of $E_2$ and $E_q$ are primitive in the sequence, then the other one is not, while $E_1$ is always primitive.
If $\{D, E\}$ has no common dual disk, then $D$ is $E_2$ or $E_q$.
In this case, $\{D, E\}$ is contained in the unique primitive triple $\{D, E, E_1\}$ if $D$ is $E_2$, or in the unique triple $\{D, E, E_{q+1}\}$ if $D$ is $E_q$.
Suppose next that $\{D, E\}$ has a common dual disk.
Then $D$ is $E_1$, and hence $\{D, E\}$ is either contained in a unique primitive triple or contained in no primitive triple, according as $E_2$ is primitive or not.

Now suppose that $p \geq 5$, and let $\{D, E, F\}$ be any primitive triple of $V$.
Consider again the sequence of disks $E_0, E_1, \cdots, E_p$ generated by a dual pair $\{E, E'\}$ in the above, where $D$ is one of $E_1$, $E_2$ or $E_q$.
We have only two possibilities, the triple $\{D, E, F\}$ is $\{E, E_1, E_2\}$ or $\{E, E_q, E_{q+1}\}$.
In case of $\{D, E, F\} = \{E, E_1, E_2\}$, the pair $\{E, E_1\}$ has a unique common dual disk, say $E'$, while $\{E, E_2\}$ has not.
For the pair $\{E_1, E_2\}$, exchanging the roles of $E_1$ and $E$, we have the sequence of disks $D_0, D_1, \cdots, D_p$ generated by the dual pair $\{E_1, E'\}$, where $D_0 = E_0$, $D_1 = E$ and $D_2 = E_2$.
Thus $\{E_1, E_2\} = \{E_1, D_2\}$ has no common dual disk.

In case of $\{D, E, F\} = \{E, E_q, E_{q+1}\}$, it is obvious that each of $\{E, E_q\}$ and $\{E, E_{q+1}\}$ has no common dual disks
while $\{E_q, E_{q+1}\}$ has one.
The Figure \ref{triple} (b) illustrates the case of $L(5, 2)$, and so $\{E_q, E_{q+1}\} = \{E_2, E_3\}$.
In the figure, the union of three arcs $p$, $q$ and $r$ is a circle, say $l$, which intersects each of $\partial E_2$ and $\partial E_3$ in a single point.
To see that $l$ bounds a disk in $W$, consider first the circle $m$ that is the union of three arcs connecting $d, e, f$ and $g$.
The circle $m$ bounds a disk in $W$ since it lies in the boundary of the $3$-ball $W$ cut off by $E' \cup E'_0$, where $\partial E'$ is the union of three arcs connecting $a, b$ and $c$, and $\partial E'_0$ is the union of the five dotted arcs in the figure.
Consequently, we see that $l$ also bounds a disk in $W$, that is the common dual disk of $\{E_2, E_3\}$, since $l$ lies in the boundary of the $3$-ball $W$ cut off by $E'$ and the disk bounded be $m$.
Note that Figure \ref{triple} (b) can be adapted easily for the pair $\{E_q, E_{q+1}\}$ in any $L(p, 2)$ with $p \geq 5$ to show that the pair has a common dual disk.

Conversely, suppose that there is a primitive triple $\{D, E, F\}$ of $V$.
Choose a dual disk $E'$ of $E$ so that the resulting semiprimitive disk $E_0$ intersects $D$ minimally.
If $D$ is disjoint from $E_0$, take $E_1 = D$, and if $D$ intersects $E_0$, take $E_1$ the disk obtained from surgery on $E_0$ along an outermost subdisk of $D$ cut off by $D \cap E_0$ other than $E$.
Then we have the sequence of disks $E_0, E_1, \cdots, E_p$ generated by the dual pair $\{E, E'\}$ starting with $E_0$ and $E_1$.
The boundary circle of the meridian disk of the solid torus $\cl(W - \Nbd(E \cup E'))$ is a $(p, \bar q)$-curve for some $\bar q \in \{q, q', p-q', p-q\}$ on the boundary of $\cl(V - \Nbd(E \cup E'))$, where $q'$ is the unique integer satisfying $qq' \equiv \pm 1$ (mod $p$) and $1 \leq q' \leq p/2$.
Considering all the possible $(p, \bar q)$-curves, the disk $D$ must be one of $E_1$, $E_{q'}$ and $E_q$.
In each case, the primitive disk $F$ is a disk in the sequence which is adjacent to $D$.
Thus $\{D, F\}$ is one of $\{E_1, E_2\}$, $\{E_q, E_{q+1}\}$ and $\{E_{q'}, E_{q'+1}\}$, which implies $q=2$ or $p= 2q+1$.
\end{proof}

\section{The disk complexes}
\label{sec:disk_complex}

Let $M$ be an irreducible $3$-manifold with compressible boundary.
The {\it disk complex} of $M$ is a simplicial complex defined as follows.
The vertices are the isotopy classes of essential disks in $M$, and a collection of $k+1$ vertices spans a $k$-simplex if and only if it admits a collection of representative disks which
are pairwise disjoint.
In particular, if $M$ is a handlebody of genus $g \geq 2$, then the disk complex is $(3g - 4)$-dimensional and is not locally finite.
The following is a key property of a disk complex.

\begin{theorem}
If $\mathcal K$ is a full subcomplex of the disk complex satisfying the following condition, then $\mathcal K$ is contractible.

\begin{itemize}
 \item Let $E$ and $D$ be disks in $M$ representing vertices of $\mathcal K$.
 If they intersect each other transversely and minimally, then at least one of the disks from surgery on $E$ along an outermost subdisk of $D$ cut off by $D \cap E$ represents a vertex of $\mathcal K$.
\end{itemize}
\label{thm:surgery}
\end{theorem}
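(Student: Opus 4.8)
The plan is to establish that $\mathcal K$ is weakly contractible and then invoke Whitehead's theorem; since $\mathcal K$ is a simplicial (hence CW) complex, weak contractibility upgrades to genuine contractibility. Concretely, I would fix $k \geq 0$ and a map $g \colon S^k \to \mathcal K$ and show that $g$ is null-homotopic; the case $k = 0$ simultaneously yields path-connectedness, and $\mathcal K$ is nonempty since it has a vertex. After subdividing I may assume $g$ is simplicial for some triangulation $T$ of $S^k$, so that its image is a finite subcomplex.

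Fix a vertex $E$ of $\mathcal K$, represented by an essential disk again denoted $E$. The first step is to isolate a contractible target: let $\mathcal K_E$ be the full subcomplex spanned by $E$ together with every vertex whose representative can be isotoped off $E$. Any simplex of $\mathcal K_E$ is a family of pairwise disjoint disks each disjoint from $E$, so adjoining $E$ again gives a simplex; hence $\mathcal K_E$ is a cone with apex $E$ and is contractible. It therefore suffices to homotope $g$ until its image lies in $\mathcal K_E$.

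The engine of the homotopy is the surgery hypothesis, read with the roles of its two disks interchanged: for any vertex $D$ of $\mathcal K$ meeting $E$, one of the two disks obtained by surgering $D$ along an outermost subdisk of $E$ (cut off by $D \cap E$) again represents a vertex of $\mathcal K$, and each such disk meets $E$ in strictly fewer arcs than $D$, since the surgery arc no longer counts. I would assign to $g$ the complexity $c(g) = \sum_v |g(v) \cap E|$, summing over the vertices $v$ of $T$ the minimal geometric intersection of $g(v)$ with $E$. If $c(g) = 0$ then $g$ maps into the cone $\mathcal K_E$ and is null-homotopic; otherwise I would perform a surgery that strictly lowers $c(g)$ and induct.

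The main obstacle is carrying out one surgery step while keeping the map simplicial: after replacing an image disk $D$ by a surgered disk $D'$, the disk $D'$ must be isotopic off every disk $g(w)$ with $w$ adjacent in $T$ to a vertex sent to $D$. The surgered disk is contained in a regular neighborhood of $D$ together with the outermost subdisk $C \subset E$ used in the surgery, and the neighbors $g(w)$ are already disjoint from $D$, so the only possible new intersections come from $g(w) \cap C \subseteq g(w) \cap E$. The device that controls this is to put the entire image system $\bigcup_v g(v)$ in minimal position with respect to $E$ and to choose $C$ to be cut off by an arc of $E \cap \bigl(\bigcup_v g(v)\bigr)$ that is outermost in $E$ among all such arcs over the whole image; then $\operatorname{int} C$ meets no disk of the image, so $D'$ can be isotoped off all neighboring disks and the modification can be spread across the relevant stars. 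Verifying this clean-surgery step, and checking that the combinatorial replacement is realized by an honest homotopy, is the technical heart of the argument; once it is in place, the induction on $c(g)$ terminates with $g$ homotoped into the contractible cone $\mathcal K_E$, proving that $\mathcal K$ is contractible.
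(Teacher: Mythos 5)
Your proposal is correct and follows essentially the same route as the paper's source for this theorem: the paper itself defers to \cite{C}, whose proof is exactly this induction on the total intersection number of a simplicial sphere with a fixed vertex disk $E$, using an arc of $E\cap(\bigcup_v g(v))$ outermost in $E$ to perform a clean surgery that keeps the map simplicial, terminating in the contractible star of $E$. No substantive differences to report.
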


In \cite{C}, the above theorem is proved in the case where $M$ is a handlebody, but the proof is still valid for an arbitrary irreducible manifold with compressible boundary. From the theorem, we see that the disk complex itself is contractible, and the {\it non-separating disk complex} is also contractible, which is the full subcomplex spanned by the vertices of non-separating disks.
We denote by $\mathcal D(M)$ the non-separating disk complex of $M$.

Consider the case that $M$ is a genus two handlebody $V$.
Then the complex $\mathcal D(V)$ is $2$-dimensional, and every edge of $\mathcal D(V)$ is contained in infinitely but countably many $2$-simplices.
For any two non-separating disks in $V$ which intersect each other transversely and minimally, it is easy to see that ``both'' of the two disks obtained from surgery on one along an outermost subdisk of another cut off by their intersection are non-separating.
\begin{figure}
\centering
\includegraphics[width=7cm,clip]{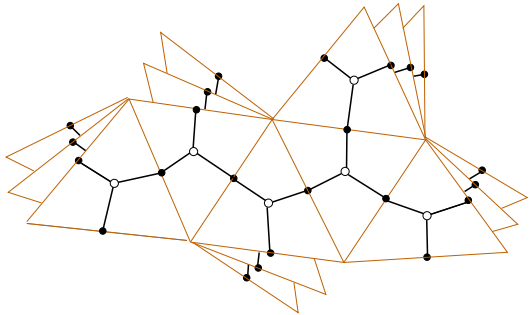}
\caption{A portion of the non-separating disk complex $\mathcal D(V)$ of a genus two handlebody $V$ with its dual complex, a tree.}
\label{disk_complex}
\end{figure}
This implies, from Theorem \ref{thm:surgery}, that $\mathcal D(V)$ and the link of any vertex of $\mathcal D(V)$ are all contractible.
Thus we see that $\mathcal D(V)$ deformation retracts to a tree in the barycentric subdivision of it.
Actually, this tree is a dual complex of $\mathcal D(V)$.
A portion of the non-separating disk complex of $V$ together with its dual tree is described  in Figure \ref{disk_complex}.

\section{The primitive disk complexes}
\label{sec:primitive_disk_complex}

Now we return to the genus two Heegaard splitting $(V, W; \Sigma)$ of a lens space $L = L(p, q)$.
Again we will assume that $1 \leq q \leq p/2$.
The {\it primitive disk complex} $\mathcal P(V)$ for $L(p, q)$ is defined to be the full subcomplex of $\mathcal D(V)$ spanned by the vertices of primitive disks in $V$.
From the structure of $\mathcal D(V)$, we see that every connected component of any full subcomplex of $\mathcal D(V)$ is contractible.
Thus $\mathcal P(V)$ is either contractible by itself or each of its connected components is contractible.
In this section, we describe the complete combinatorial structure of the primitive disk complex $\mathcal P(V)$ for each lens space. In particular, we find all lens spaces whose primitive disk complexes are contractible.

As in the previous sections, let $E$ be a primitive disk in $V$ with a dual disk $E'$.
The disk $E'$ forms a complete meridian system of $W$ together with the semiprimitive disk $E'_0$ in $w$ disjoint from $E \cup E'$.
Assigning the symbols $x$ and $y$ to the oriented circles $\partial E'$ and $\partial E'_0$ respectively, any oriented simple closed curve, especially the boundary circle of any essential disk in $V$, represents an element of the free group $\pi_1(W) = \langle x, y \rangle$ in terms of $x$ and $y$.

Let $D$ be a non-separating disk in $V$.
A simple closed curve $l$ on $\partial V$
intersecting $\partial D$ transversely in a
single point is called a {\it dual circle} of $D$.

\begin{lemma}
\label{lem:key lemma for non-connectivity}
Let $\{D_1, D_2\}$ be a complete meridian system of $V$.
Suppose that the non-separating disks $D_1$ and $D_2$ satisfy the following conditions:
\begin{enumerate}
\item
for each $i \in \{ 1,2 \}$, all intersections of $\partial D_i$
and $\partial E^\prime$ have the same sign;
\item
for each $i \in \{ 1 , 2 \}$, the circle $\partial D_i$ represents an element $w_i$ of the form
$(xy^q)^{m_i} x y^{n_i}$, where
$0 \leqslant m_1 < m_2$ and $n_1 \neq n_2$;
\item
any subarc of $\partial E^\prime$ with both endpoints on $\partial D_1$
intersects $\partial D_2$; and
\item
there exists a dual circle of $D_1$ disjoint from
$\partial D_2 \cup \partial E^\prime$.
\end{enumerate}
Then there exists a non-separating disk $D_*$ in $V$
disjoint from $D_1 \cup D_2$ satisfying the following:
\begin{enumerate}
\item
all intersections of $\partial D_*$
and $\partial E^\prime$ have the same sign;
\item
$\partial D_*$ represents an element of the form
$(xy^q)^{m_1 + m_2 + 1} x y^{n_1 + n_2 - q}$;
\item
for each $i \in \{1 , 2 \}$, any subarc of
$\partial E^\prime$ with both endpoints on $\partial D_i$
intersects $\partial D_*$; and
\item
for each $i \in \{1 , 2 \}$,
there exists a dual circle of $D_i$ disjoint from
$\partial D_* \cup \partial E^\prime$.
\end{enumerate}
\end{lemma}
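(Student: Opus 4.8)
The plan is to realize $D_*$ as a band sum of $D_1$ and $D_2$, that is, as the disk whose boundary is the frontier in $\partial V$ of a regular neighborhood of $\partial D_1 \cup b \cup \partial D_2$ for a suitably chosen spanning arc (band) $b$ --- exactly the frontier-of-neighborhood construction used to build the sequences of disks in Section~\ref{sec:sequence_of_disks}. First I would cut $\partial V$ along $\partial D_1 \cup \partial D_2$ to obtain a $4$-holed sphere $\Sigma^*$ with boundary circles $\partial D_1^\pm$ and $\partial D_2^\pm$; in $\Sigma^*$ both $\partial E'$ and $\partial E_0'$ appear as families of properly embedded arcs. Since $\{D_1,D_2\}$ is a complete meridian system, any essential simple closed curve in $\Sigma^*$ separating the four holes as $\{\partial D_1^+,\partial D_2^+\}$ from $\{\partial D_1^-,\partial D_2^-\}$ bounds a non-separating disk in $V$ disjoint from $D_1 \cup D_2$ and isotopic to neither; such a curve is precisely the band sum determined by an arc from a copy of $\partial D_1$ to a copy of $\partial D_2$. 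The disk $D_*$ will be the one determined by the correct such arc, so that it is automatically disjoint from $D_1\cup D_2$ as required.

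The core of the argument is to choose $b$ disjoint from $\partial E'$ while crossing the arcs of $\partial E_0'$ in a controlled way. Condition~(4) supplies a dual circle $l_1$ of $D_1$ disjoint from $\partial D_2 \cup \partial E'$, which in $\Sigma^*$ is an arc joining $\partial D_1^+$ and $\partial D_1^-$ missing $\partial E'$ and $\partial D_2$; this gives the clean corridor along which I would route $b$, redirecting it to a copy of $\partial D_2$ using the interleaving forced by condition~(3). I would attach $b$ at the terminal syllables $xy^{n_1}$ of $w_1$ and $xy^{n_2}$ of $w_2$ and let $b$ run parallel to arcs of $\partial E_0'$ so as to transfer $y$-powers; reading the crossings of the resulting frontier circle with $\partial E' \cup \partial E_0'$ then yields a cyclically reduced word which, since $b$ is traversed once in each direction, redistributes the pair of syllables $(xy^{n_1},xy^{n_2})$ into $(xy^q,\,xy^{n_1+n_2-q})$ while inserting one further $xy^q$. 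This is exactly $(xy^q)^{m_1+m_2+1}xy^{n_1+n_2-q}$, giving output~(2); the fact that $b\cap\partial E'=\varnothing$, together with the positivity and hence common sign of the $x$-crossings of $w_1$ and $w_2$ from condition~(1), yields output~(1).

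It remains to verify the structural conditions~(3) and~(4), which are what make the lemma iterable. Away from $b$ the circle $\partial D_*$ runs parallel to $\partial D_1$ on one side and to $\partial D_2$ on the other; hence any subarc of $\partial E'$ with both endpoints on $\partial D_1$, which by condition~(3) already crosses $\partial D_2$, must also cross the parallel copy of $\partial D_2$ contained in $\partial D_*$, and the symmetric bookkeeping --- using the same-sign condition to prevent such a subarc from escaping on the wrong side of the band --- handles the endpoints-on-$\partial D_2$ case. For output~(4), the circle $l_1$ is already disjoint from $\partial D_2 \cup \partial E'$ and meets $\partial D_1$ in a single point away from the band, so after a small isotopy off $\partial D_*$ it serves as the required dual circle of $D_1$; a dual circle of $D_2$ disjoint from $\partial D_* \cup \partial E'$ is produced from the opposite side of $b$, which meets the $\partial D_2$-copy in $\partial D_*$ once and avoids $\partial E'$ by construction.

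The step I expect to be the main obstacle is the simultaneous control in the second paragraph: arranging $b$ to be disjoint from $\partial E'$ yet to cross $\partial E_0'$ with exactly the multiplicity effecting the redistribution $(n_1,n_2)\mapsto(q,n_1+n_2-q)$, and confirming that the frontier word is cyclically reduced of the asserted form. This is where conditions~(2) and~(3) are used essentially --- condition~(3) guarantees that the arcs of $\partial E'$ interleave $\partial D_1$ and $\partial D_2$ so that no unwanted cancellation or sign reversal occurs --- and where a careful reading of the local picture in $\Sigma^*$, rather than any soft argument, is needed. Extracting the new dual circle of $D_2$ for output~(4) is the second delicate point, since that datum is absent from the hypotheses and must be recovered from the band $b$ itself.
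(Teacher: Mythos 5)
Your proposal follows essentially the same route as the paper: cut $\partial V$ along $\partial D_1 \cup \partial D_2$ into a $4$-holed sphere, use hypotheses (3) and (4) to pin down the pattern of the arcs of $\partial E'$ and $\partial E_0'$ there, take $D_*$ to be the band-sum (separating) curve, and read off the new word and the two dual circles from the two sides of the band. One small correction to your second paragraph: since the total $y$-exponent of the output word equals the sum of those of $w_1$ and $w_2$, the band $b$ must be disjoint from $\partial E_0'$ as well as from $\partial E'$ --- the redistribution $(n_1,n_2)\mapsto(q,\,n_1+n_2-q)$ comes from how the existing arcs of $\partial E_0'$ regroup along the subarcs of $\partial D_*$, not from new crossings of $b$ with $\partial E_0'$.
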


\begin{proof}
For $i \in \{1,2\}$, let $\nu_i$
be a connected subarc of
$\partial D_i$
that determines the subword
$y^{n_i}$ of $w_i$.
Cutting off $\partial V$ by $\partial D_1 \cup \partial D_2$,
we obtain the 4-holed sphere $\Sigma_\ast$.
We denote by $d_i^\pm$ the boundary circles
of $\Sigma_\ast$ coming from $\partial D_i$, and
by $\nu_i^\pm$ the subarc of $d_i^\pm$
coming from $\nu_i$.
By the assumption (2),
we may assume without loss of generality that
each oriented arc component
$\partial E^\prime \cap \Sigma_\ast$
directs from $d_{i_1}^+$ to $d_{i_2}^-$ for certain
$i_1, i_2 \in \{ 1 , 2 \}$.
By the assumptions (3) and (4),
the 4-holed sphere $\Sigma_\ast$ and the arcs $\Sigma_\ast \cap \partial E^\prime$
on $\Sigma_\ast$ can be drawn as in one of the
two diagrams of Figure \ref{fig:L_or_R-replacement}.
\begin{figure}
\centering
\includegraphics[width=12.5cm,clip]{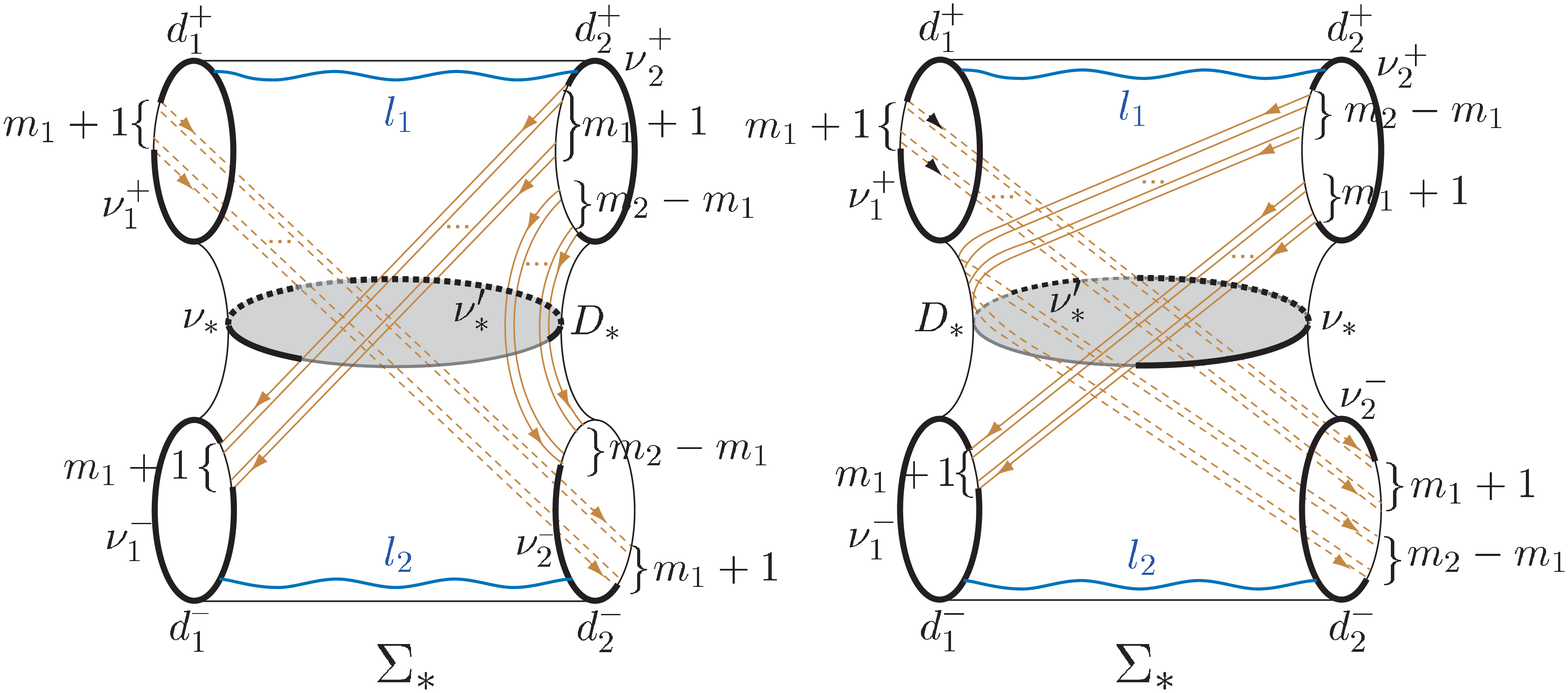}
\caption{}
\label{fig:L_or_R-replacement}
\end{figure}
In the figure the arcs $\nu_i^{\pm}$ in $d^{\pm}_i$ are
drawn in bold.

Let $D_*$ be the horizontal disk in each of the diagrams.
It is clear that
$D_*$ satisfies conditions (1) and (3).
Also, the simple closed curve on $\partial V$
obtained from the two arcs
$l_1$ and $l_2$ depicted in the figure
by gluing back along $d_1^{\pm}$ and $d_2^\pm$
is a dual circle of $D_i$ disjoint from
$\partial D_* \cup \partial E^\prime$, and so the condition (4) holds for $D_*$.
Moreover, it is easily seen that
all but one component $\nu_*$ of
$\partial D_*$ cut off by $\partial E^\prime$,
shown in Figure \ref{fig:L_or_R-replacement},
determine a word of the form $y^q$.
Hence it suffices to show that
the arc $\nu_*$ determines a word of the
form $y^{n_1 + n_2 - q}$.
From the arcs $\nu_1^+ \cup \nu_2^+$, algebraically
$n_1 + n_2$ arcs of $\partial E_0^\prime \cap \Sigma_\ast$
come down and all of them pass trough
$\nu_* \cup \nu_*^\prime$ from above,
where the arc $\nu_*^\prime$ is shown in
Figure \ref{fig:L_or_R-replacement}.
Since the arc $\nu_*^\prime$ determines a word of the
form $y^q$,
the arc $\nu_*$ determines a word of the
form $y^{n_1 + n_2 - q}$.
\end{proof}

Let $(D_1, D_2)$ be an ordered pair of
disjoint non-separating disks in $V$
such that the (unordered) pair $\{D_1, D_2 \}$
satisfies the conditions of Lemma
\ref{lem:key lemma for non-connectivity}.
Then there exists a disk $D_*$
as in the lemma
and we again obtain new ordered pairs
$(D_1, D_*)$ and $(D_*, D_2)$ such that
both $\{ D_1, D_* \}$ and $\{ D_*, D_2 \}$
satisty the conditions of the lemma.
We call these new pairs $(D_1 , D_*)$ and
$(D_* , D_2)$ the pairs obtained by
R-replacement and L-replacement, respectively,
of $(D_1, D_2)$.

\vspace{1em}

\begin{theorem}
For a lens space $L(p, q)$ with $1 \leq q \leq p/2$,
the primitive disk complex $\mathcal P(V)$ for $L(p, q)$ is contractible if and only if $p \equiv \pm 1 \pmod{q}$.
\label{thm:contractible}
\end{theorem}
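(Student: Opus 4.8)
The plan is to reduce the statement to a connectivity question and then treat the two implications separately. Recall from the opening of this section that every connected component of a full subcomplex of $\mathcal{D}(V)$ is contractible; since $\mathcal{P}(V)$ is a nonempty full subcomplex, it is contractible if and only if it is connected, and it fails to be contractible precisely when it is disconnected. So it suffices to show that $\mathcal{P}(V)$ is connected if and only if $p \equiv \pm 1 \pmod q$.

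For the forward implication, assume $p \equiv \pm 1 \pmod q$. Then Theorem \ref{thm:primitive} is exactly the assertion that $\mathcal{P}(V)$ satisfies the surgery hypothesis of Theorem \ref{thm:surgery}: for any two primitive disks $D$ and $E$ meeting transversely and minimally, at least one of the disks obtained by surgery on $E$ along an outermost subdisk of $D$ cut off by $D \cap E$ is again primitive, hence represents a vertex of $\mathcal{P}(V)$. Theorem \ref{thm:surgery} then gives at once that $\mathcal{P}(V)$ is contractible. This half requires no work beyond quoting those two results.

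For the converse, assume $p \not\equiv \pm 1 \pmod q$. Since $\gcd(p,q)=1$ and $1 \le q \le p/2$, this forces $q \geq 3$ (the cases $q=1,2$ always satisfy the congruence) and, because $p \neq 2q$ by coprimality and $p \neq 2q+1$, also $p \geq 2q+2$; in particular neither $q=2$ nor $p=2q+1$ holds, so by Theorem \ref{thm:triple} there are no primitive triples and $\mathcal{P}(V)$ is a graph. I would prove $\mathcal{P}(V)$ disconnected by exhibiting two primitive disks in distinct components, separated by a wall of non-primitive disks manufactured with the replacement machinery. Fix a primitive disk $E$ with dual disk $E'$, take the coordinates $x=\partial E'$ and $y=\partial E_0'$ with $\partial E_0'$ a $(p,q)$-curve, and work throughout in $\pi_1(W)=\langle x,y\rangle$, so that the relevant boundary words have the form $(xy^q)^m x y^n$. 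Starting from a complete meridian system $\{D_1,D_2\}$ meeting the hypotheses of Lemma \ref{lem:key lemma for non-connectivity}, with $D_1$ primitive (so $0=m_1<m_2$) and with both tails satisfying $n_1,n_2 \geq p-q$, I would iterate the R- and L-replacements. Each replacement produces a disk whose word is $(xy^q)^{m} x y^{n}$ with $m \geq 1$ and $n=n_1+n_2-q$; since $p \ge 2q+2$ gives $p-q \ge q+2$, an induction shows every tail produced stays $\geq q+2$. Thus every wall disk has a word containing both $x y^q x$ (as $m \ge 1$) and $y^{q+2}$ (as $n \ge q+2$), so by Lemma \ref{lem:key}(2) it is not a power of a primitive element, and by Lemma \ref{lem:primitive_element} it is not a primitive disk.

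The key structural input is that conditions (3) and (4) of Lemma \ref{lem:key lemma for non-connectivity} are reproduced under every replacement, so the family propagates indefinitely and, in the tree dual to $\mathcal{D}(V)$, assembles into a separating geodesic wall all of whose disks are non-primitive. Any path in $\mathcal{P}(V)$ joining primitive disks lying on opposite sides of this wall would have to cross it and hence leave $\mathcal{P}(V)$, which yields the desired disconnection. The main obstacle is precisely this separation step: one must choose the seed $\{D_1,D_2\}$ and locate genuine primitive disks on both sides of the resulting wall, verify that conditions (1)--(4) hold at every node so that the wall is complete, and confirm that no primitive disk can detour around the wall in the dual tree. By contrast, the non-primitivity of each individual wall disk is the routine consequence of Lemma \ref{lem:key} together with the arithmetic inequality $n \geq q+2$ forced by $p \geq 2q+2$, and the forward implication is immediate from Theorems \ref{thm:primitive} and \ref{thm:surgery}.
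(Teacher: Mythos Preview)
The forward implication is handled correctly and exactly as in the paper: quote Theorem~\ref{thm:primitive} to verify the surgery hypothesis and apply Theorem~\ref{thm:surgery}.

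For the converse, there is a genuine gap. You propose to build a ``wall'' of non-primitive disks by iterating L/R replacements from a seed pair $\{D_1,D_2\}$ with $D_1$ primitive and both tails $n_1,n_2\geq p-q$. Your arithmetic is sound: under replacement the new tail is $n_1+n_2-q$, so by induction every tail stays $\geq q+2$, and then Lemma~\ref{lem:key}(2) together with Lemma~\ref{lem:primitive_element} shows each produced disk is non-primitive. But that is precisely the problem: because the tails only grow, your process can never land on a primitive disk. You have a primitive disk $D_1$ on one side and an infinite family of non-primitive disks, but no primitive disk on the other side --- and without one there is nothing to separate. You flag this yourself as ``the main obstacle,'' but it is not a detail to be filled in later; it is the whole content of the argument. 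Moreover, the L/R replacement tree you generate is just one specific subtree of the dual tree, not a separating object: at each edge there are infinitely many other $2$-simplices your construction never visits, so there is no reason a path in $\mathcal P(V)$ could not detour around it.

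The paper resolves this differently and more directly. It starts from the pair $(D_0,D_{-1})=(E_m,E)$, with tails $q+r$ and $0$ where $p=qm+r$, $2\leq r\leq q-2$. It then performs a \emph{specific finite} sequence of L/R replacements dictated by the continued fraction expansion of $s/(t+1)$, where $s,t$ are chosen so that $sr-tq=q+1$. Tracking the tails via Farey addition, the terminal disk $D_{p_0+\cdots+p_k}$ carries a word of the form $(xy^q)^d\,xy^{q+1}$, which \emph{is} primitive. Now one has two primitive disks $D_{-1}$ and $D_{p_0+\cdots+p_k}$ together with the single edge $\{D_0,D_1\}$ of $\mathcal D(V)$, both of whose endpoints are non-primitive, lying on the dual-tree path between them; that one edge separates, and no wall is needed. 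Your growing-tail setup cannot be salvaged to reach such a terminal primitive disk --- the key missing idea is to start with a small tail (namely $0$, coming from $E$) and navigate via the continued fraction to hit the primitive tail value $q+1$ exactly.
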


\begin{proof}
The sufficiency follows directly from Theorem \ref{thm:primitive} and Theorem \ref{thm:surgery}.
For the necessity, we will show that $\mathcal P(V)$ is not connected.
Suppose that $p$ and $q$ do not satisfy the condition $p \equiv \pm 1 \pmod{q}$.
Then there exist unique relatively prime integers $m$ and $r$ such that $p=qm + r$ with $2 \leq r \leq q-2$, and hence there exist natural numbers $s$ and $t$ with
$s r - t q = q + 1$.
Consider the unique continued fraction expansion
\[
s / (t+1) =
p_0 +  \frac{1}{p_1 + \frac{1}{p_2 + \frac{1}{\ddots + \frac{1}{p_k}}}} \]
where $p_i \geq 1$ for $i \in \{0, 1, \cdots, k-1\}$ and $p_k \geq 2$.

Let $E$ be any primitive disk in $V$ and let $E'$ be a dual disk of $E$.
The boundary circle of the semiprimitive disk in $W$ disjoint from $E \cup E'$ is a $(p, \bar q)$-curve on the boundary of the solid torus $\cl(V - \Nbd(E \cup E'))$ for some $\bar q \in \{q, q', p-q', p-q\}$, where $q'$ satisfies $0 < q' < p$ and $qq' \equiv \pm 1 \pmod p$.
We will consider only the case of $\bar q = q$, that is, the boundary circle is a $(p, q)$-curve.
The following argument can be easily adapted for the other cases.

Consider any sequence of disks $E_0, E_1, \cdots, E_p$ in $V$ generated by a dual pair $\{E, E'\}$.
Note that the disks $E_m$ and $E_{m+1}$ in the sequence are not primitive since $\partial E_m$ and $\partial E_{m+1}$ represent elements of the form $(xy^q)^{m-1}xy^{q + r}$ and $(xy^q)^m xy^r$ respectively.
Set $D_0 = E_m$ and $D_{-1} = E$.
Since $D_0$ and $D_{-1}$ satisfy the conditions of
Lemma \ref{lem:key lemma for non-connectivity},
we obtain a new ordered pair $(D_0 , D_1)$ by an
R-replacement of $(D_0, D_{-1})$.
The disk $D_1$ is not primitive since $\partial D_1$ represents an element of the form
$(xy^q)^m xy^r$.
(Actually, $D_1$ can be chosen to be the disk $E_{m+1}$ in the sequence.)
Applying R-replacements $(p_0 - 1)$ times more, starting at
$(D_0 , D_1)$, as
\[ (D_0, D_1) \to (D_0, D_2) \to \cdots  \to (D_0, D_{p_0}) , \]
we obtain the pair $(D_0 , D_{p_0})$.
Next we apply L-replacements $p_1$ times starting at $(D_0 , D_{p_0})$ as
\[ (D_0, D_{p_0}) \to (D_{p_0 + 1}, D_{p_0}) \to (D_{p_0 + 2}, D_{p_0})
\to \cdots  \to (D_{p_0 + p_1}, D_{p_0}) \]
to obtain the pair $(D_{p_0 + p_1}, D_{p_0})$.
Continuing this process, we finally obtain either the pair
$(D_{p_0 + \cdots + p_k}, D_{p_0 + \cdots + p_{k-1}})$
if $k$ is odd, or
$(D_{p_0 + \cdots + p_{k-1}}, D_{p_0 + \cdots + p_k})$
if $k$ is even, of pairwise disjoint non-separating disks.
See Figure \ref{fig:sequence_of_triangles}.

\begin{figure}
\centering
\includegraphics[width=12.5cm,clip]{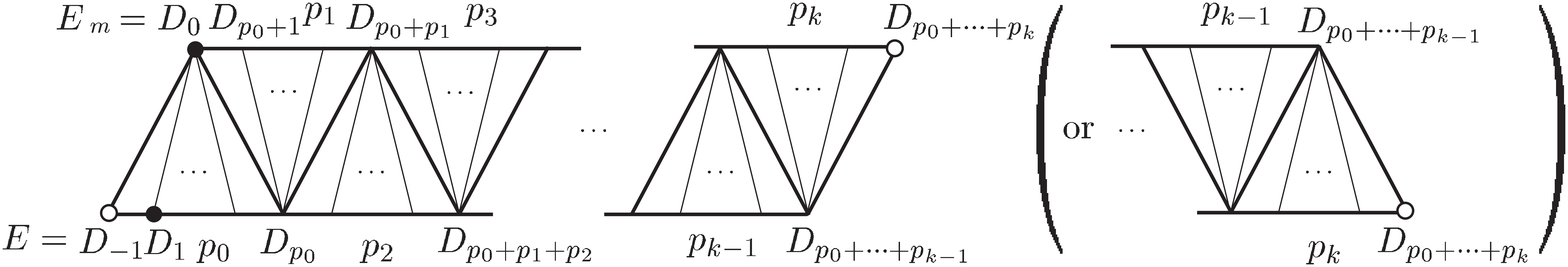}
\caption{The portion of $\mathcal{D}(V)$ obtained by
L and R-replacements from $(D_0, D_{-1})$.}
\label{fig:sequence_of_triangles}
\end{figure}

We assign $D_0$ and $D_{-1}$ the rational numbers
$1/0$ and $0/1$, respectively.
We inductively assign rational numbers to the disks appearing in the above process as follows.
Let $(D_* , D_{**})$ be an ordered pair of non-separating disks
appearing in the process.
Assume that we have already assigned $D_*$ and $D_{**}$
rational numbers $a_1 / b_1$ and $a_2 / b_2$, respectively.
Then we assign the next disk obtained by
L or R-replacement of $(D_*, D_{**})$
the rational number $(a_1 + a_2) / (b_1 + b_2)$.

\smallskip

\noindent {\it Claim.}
If a disk $D_j$, for $-1 \leq j \leq p_0 + \cdots + p_k$,
appearing in the above process is assigned
a rational number $a / b$, then $\partial D_j$
represents an element of the form
$(xy^q)^{d} xy^{a r - (b-1)q}$ for some non-negative integer $d$.

\smallskip

\noindent {\it Proof of Claim.}
If $j = -1$, then $a / b = 0 / 1$ and
$\partial D_{-1} = \partial E$ represents
$x$ and we have $a r - (b - 1)q = 0$.
If $j = 0$, then $a / b = 1 / 0$ and
$\partial D_{0} = \partial E_m$ represents an element of the form
$(xy^q)^{m - 1}xy^{q+r}$ and we have $a r - (b - 1)q = q + r$.

Assume that the claim is true for any $D_i$ with
$i$ less than $j$ and that $D_j$ is obtained from $(D_* , D_{**})$.
If $D_*$ and $D_{**}$
are assigned rational numbers $a_1 / b_1$ and $a_2 / b_2$,
respectively, then $D_j$ is assigned $(a_1 + a_2) / (b_1 + b_2)$ by definition.
By the assumption, $\partial D_*$ and $\partial D_{**}$
determine elements of the forms
$(xy^q)^{d_1} xy^{a_1 r - (b_1-1)q}$ and
$(xy^q)^{d_2} xy^{a_2 r - (b_2-1)q}$ respectively,
for some non-negative integers $d_1$ and $d_2$.
By Lemma \ref{lem:key lemma for non-connectivity},
the circle $\partial D_j$ determines an element of the form
$(xy^q)^{d_1 + d_2 + 1} xy^{(a_1+a_2) r - (b_1+b_2-1)q}$, and hence the induction completes the proof.

\smallskip

Due to well-known properties of the Farey graph, see e.g. \cite{HT85},
$D_{p_0 + \cdots + p_k}$ is assigned $s / (t+1)$.
Therefore, by the claim,
$\partial D_{p_0 + \cdots + p_k}$
determines an element of the form
$(xy^q)^d xy^{sr - tq}$, hence
$(xy^q)^d xy^{q+1}$.
This implies that $D_{p_0 + \cdots + p_k}$ is primitive.

Now, we focus on the four disks
$D_{-1}$, $D_0$, $D_1$ and $D_{p_0 + \cdots + p_k}$.
Since the dual complex of the disk complex
$\mathcal{D}(V)$ is a tree, and
the disks $D_0$ and $D_1$ are not primitive,
the primitive disks $D_{-1}$ and $D_{p_0 + \cdots + p_k}$
belong to different components of $\mathcal{P}(V)$.
This implies that $\mathcal{P}(V)$ is not connected.
\end{proof}

Now we are ready to give a precise description of the primitive disk complex $\mathcal P(V)$ of each lens space.
For convenience, we classify all the edges and $2$-simplices of $\mathcal P(V)$ as follows.

\begin{enumerate}
\item An edge of $\mathcal P(V)$ is called an {\it edge of type-$0$} ({\it type-$1$, type-$2$,} respectively) if a primitive pair representing the end vertices of the edge has no common dual disk (has a unique common dual disk, has exactly two common dual disks which form a primitive pair in $W$, respectively).
\item A $2$-simplex of $\mathcal P(V)$ is called a {\it $2$-simplex of type-$1$} ({\it of type-$3$,} respectively) if exactly one of the three primitive pairs in the primitive triple representing the three edges of the $2$-simplex has a unique common dual disk (if all the three pairs have unique common dual disks which form a primitive triple in $W$, respectively).
\end{enumerate}

By Theorems \ref{thm:common_dual} and \ref{thm:triple}, we see that each of the edges and $2$-simplices of $\mathcal P(V)$ is one of those types in the above.
Consider first the case where $\mathcal P(V)$ is not contractible, that is, non-connected.
In this case, $\mathcal P(V)$ is $1$-dimensional by Theorem \ref{thm:triple}.
Given a primitive disk $E$ in $V$, we can choose infinitely many non-isotopic dual disks $E'$ of $E$.
Considering the sequences of disks generated by the dual pair
$\{E, E'\}$ for each choice of $E'$, in the proof of Theorem \ref{thm:contractible}, we can find infinitely many connected components of $\mathcal P(V)$ which do not contain the vertex represented by $E$.
Thus $\mathcal P(V)$ has infinitely many connected components, which are all trees isomorphic to each other.
Any vertex of $\mathcal P(V)$ has infinite valency, and further, infinitely many edges of type-$0$ and of type-$1$ meet in each vertex.
The contractible case can be summarized as follows, which is a direct consequence of Theorems \ref{thm:contractible}, \ref{thm:common_dual} and \ref{thm:triple}.

\begin{theorem}
Given any lens space $L(p, q)$ with $1 \leq q \leq p/2$, if the primitive disk complex $\mathcal P(V)$ of $L(p, q)$ is contractible, then $p \equiv \pm 1 \pmod q$ and $\mathcal P(V)$ is contained in one of the following cases.
\begin{enumerate}
\item If $q \neq 2$ and $p \neq 2q + 1$, then $\mathcal P(V)$ is a tree, and every vertex has infinite valency.
    In this case,
    \begin{enumerate}
    \item if $p=2$ and $q=1$, then every edge is of type-$2$.
    \item if $p \geq 4$ and $q=1$, then every edge is of type-$1$.
    \item if $q \neq 1$, then every edge is of either type-$0$ or type-$1$, and infinitely many edges of type-$0$ and of type-$1$ meet in each vertex.
    \end{enumerate}
\item If $q = 2$ or $p=2q+1$, then $\mathcal P(V)$ is $2$-dimensional, and every vertex meets infinitely many $2$-simplices.
    In this case,
    \begin{enumerate}
    \item if $p = 3$, then every edge is of type-$1$, every $2$-simplex is of type-$3$, and every edge is contained in a unique $2$-simplex.
    \item if $p = 5$, then every edge is of either type-$0$ or type-$1$, and every $2$-simplex is of type-$1$. Every edge of type-$0$ is contained in exactly two $2$-simplices, while every edge of type-$1$ in a unique $2$-simplex.
    \item if $p \geq 7$, then every edge is of either type-$0$ or type-$1$, and every $2$-simplex is of type-$1$.
        Every edge of type-$0$ is contained in a unique $2$-simplex.
        Every edge of type-$1$ is contained in a unique $2$-simplex or in no $2$-simplex.
    \end{enumerate}
\end{enumerate}
\label{thm:structure}
\end{theorem}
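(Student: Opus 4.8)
The plan is to deduce the statement by assembling Theorems \ref{thm:contractible}, \ref{thm:common_dual}, and \ref{thm:triple}, together with the structural facts about $\mathcal D(V)$ from Section \ref{sec:disk_complex}. First I would invoke Theorem \ref{thm:contractible}: since $\mathcal P(V)$ is assumed contractible it is in particular connected, and the criterion $p \equiv \pm 1 \pmod q$ holds at once. This settles the opening clause, and the remaining task is purely to read off the combinatorial type of $\mathcal P(V)$ from the arithmetic of $(p,q)$.

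Next I would fix the dimension. By Theorem \ref{thm:triple} a primitive triple in $V$ --- equivalently a $2$-simplex of $\mathcal P(V)$ --- exists if and only if $q=2$ or $p=2q+1$. Hence when $q\neq 2$ and $p\neq 2q+1$ the complex $\mathcal P(V)$ is at most $1$-dimensional; being contractible it is connected and simply connected, whence a tree, giving case (1). When $q=2$ or $p=2q+1$ primitive triples exist, and since $\mathcal D(V)$ is $2$-dimensional, $\mathcal P(V)$ is exactly $2$-dimensional, giving case (2). In either case the infinite valency of every vertex follows from the remark preceding Theorem \ref{thm:triple} that a primitive disk belongs to infinitely many primitive pairs, realized by varying the dual disk $E'$ of a fixed primitive $E$ and taking $E_1$ in the resulting sequence of disks.

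To classify the edges I would appeal to Theorem \ref{thm:common_dual}. When $q=1$ every primitive pair has a common dual disk, unique for $p\geq 3$ and doubled for $p=2$; this produces the type-$1$ edges of case (1)(b) and, after recalling from Theorem \ref{thm:common_dual} that the two common duals for $L(2,1)$ are disjoint and hence span an edge in $W$, the type-$2$ edges of case (1)(a). When $q\neq 1$, Theorem \ref{thm:common_dual} shows not every pair has a common dual, while Lemma \ref{lem:primitive_pair} together with Lemma \ref{lem:common_dual} exhibits both the pairs $\{E,E_1\}$ (which share the semiprimitive $E_0$, hence type-$1$) and the pairs $\{E,E_{q'}\}$ (whose only candidate disjoint disks $E_{q'\pm 1}$ fail to be semiprimitive, hence type-$0$); varying $E'$ yields infinitely many of each meeting $E$, which is case (1)(c).

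Finally, for the $2$-dimensional case I would transcribe the refinements (1)--(4) of Theorem \ref{thm:triple} into the edge and $2$-simplex vocabulary. For $p=3$ the uniqueness of the triple and the fact that its three common duals form a primitive triple of $W$ give every edge type-$1$, every $2$-simplex type-$3$, and each edge in a unique $2$-simplex, which is case (2)(a). For $p\geq 5$, part (4) of Theorem \ref{thm:triple} says exactly one pair in each triple has a common dual, forcing every $2$-simplex to be of type-$1$; the incidence counts in parts (2) and (3) then separate $p=5$ (case (2)(b)) from $p\geq 7$ (case (2)(c)). I expect this last transcription to be the most error-prone step, since it requires tracking, for each residue class of $(p,q)$, exactly which of $E_1$, $E_q$, $E_{q'}$ is primitive and which adjacent disks are semiprimitive; but all the underlying geometric content already lives in Theorems \ref{thm:common_dual} and \ref{thm:triple}, so no genuinely new argument is required --- only careful case bookkeeping.
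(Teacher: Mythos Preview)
Your proposal is correct and follows essentially the same approach as the paper: the paper states that Theorem \ref{thm:structure} is ``a direct consequence of Theorems \ref{thm:contractible}, \ref{thm:common_dual} and \ref{thm:triple}'' without spelling out the case analysis, and your plan is precisely to carry out that bookkeeping. Your elaboration of how each of the subcases (1)(a)--(c) and (2)(a)--(c) is extracted from the cited results is accurate, including the use of Lemmas \ref{lem:common_dual}, \ref{lem:primitive_pair}, and \ref{lem:surgery_on_primitive} to distinguish type-$0$ from type-$1$ edges when $q\neq 1$.
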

Figure \ref{shape} illustrates a portion of each of the contractible primitive disk complexes $\mathcal P(V)$ classified in the above, together with its surroundings in $\mathcal D(V)$.
We label simply $E$ or $E_j$ for the vertices represented by disks $E$ or $E_j$.
In the case (2)-(b), the complex $\mathcal P(V)$ for $L(5, 2)$, every edge is contained a unique ``band''.
The edges in the boundary of a band are of type-$1$, while the edges inside a band are of type-$0$.
The whole figure of $\mathcal P(V)$ for $L(5, 2)$ can be imagined as the union of infinitely many bands such that any of two bands are disjoint from each other or intersects in a single vertex.
In the case (2)-(c), there are two kind of sequences of disks $E_0, E_1, \cdots, E_p$ generated by a dual pair $\{E, E'\}$.
The first one has primitive disks $E_1, E_q, E_{p-q}$ and $E_{p-1}$, while the second one has $E_1, E_2, E_{p-2}$ and $E_{p-1}$.
Figure \ref{shape} (2)-(c) illustrates an example of the first one.
\begin{figure}
\centering
\includegraphics[width=10cm,clip]{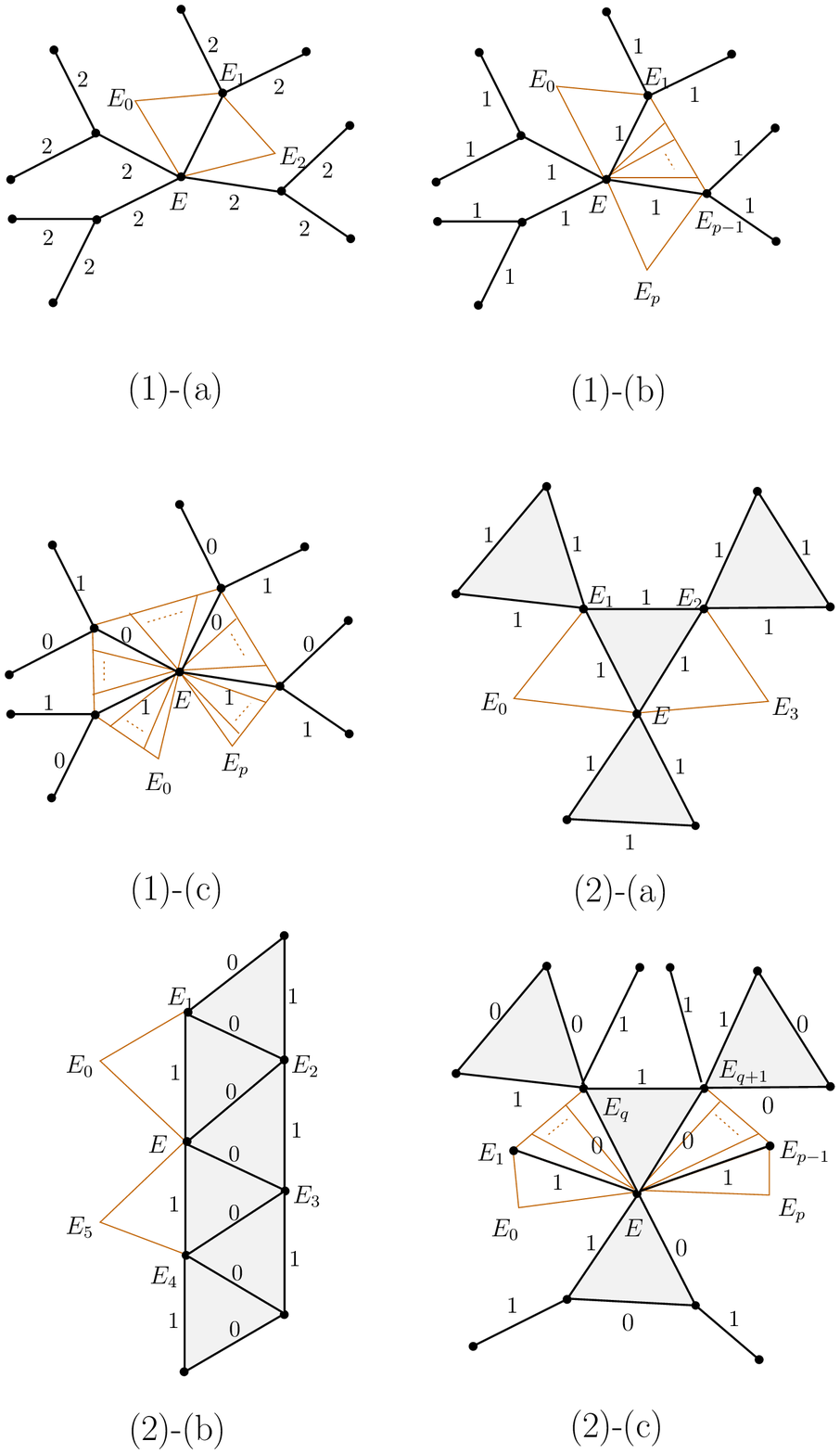}
\caption{A portion of each primitive disk complex $\mathcal P(V)$ together with simplices whose vertices are represented by disks in a sequence generated by a dual pair. Each number designates the type of the edge.}
\label{shape}
\end{figure}

\smallskip
\noindent {\bf Acknowledgments.}
The authors wish to express their gratitude to Darryl McCullough for helpful discussions with his valuable advice and comments.
Part of this work was carried out while the second-named author
was visiting Hanyang University.
Grateful acknowledgment is made for hospitality.

\bibliographystyle{amsplain}

\end{document}